\newcommand{\definebox}[3]{%
    \newcounter{#1}
    \newenvironment{#1}[1][]{%
        \stepcounter{#1}%
        \mdfsetup{%
            frametitle={%
            \tikz[baseline=(current bounding box.east),outer sep=0pt]
            \node[anchor=east,rectangle,fill=white]
            {\strut #2~\csname the#1\endcsname\ifstrempty{##1}{}{##1}};}}%
        \mdfsetup{innertopmargin=1pt,linecolor=#3,%
            linewidth=3pt,topline=true,
            frametitleaboveskip=\dimexpr-\ht\strutbox\relax,}%
        \begin{mdframed}[]
            \relax%
            }{
        \end{mdframed}}%
}
\theoremstyle{plain}
  \newtheorem{theorem}{Theorem}
  \newtheorem{lemma}{Lemma}
  \newtheorem{corollary}{Corollary}
  \newtheorem{definition}{Definition}
  \newtheorem{exercise}{Exercise}
  \newtheorem{hypotheses}{Hypothesis}
  \newtheorem{statement}{Statement}
  \newtheorem{consiquence}{Consiquence}
  \newtheorem{theorem}{Теорема}
  \newtheorem{lemma}{Лемма}
  \newtheorem{statement}{Утверждение}
\theoremstyle{remark}
  \newtheorem{remark}{Remark}
  \newtheorem{example}{Example}
  \newtheorem{remark}{Замечание}
\DeclarePairedDelimiter\abs{\lvert}{\rvert}
\DeclarePairedDelimiter\floor{\lfloor}{\rfloor}
\DeclarePairedDelimiter\lr{(}{)}
\DeclarePairedDelimiter\set{\{}{\}}
\newcommand{\N}{\mathbb{N}}
\newcommand{\R}{\mathbb{R}}
\newcommand{\E}{\mathsf{E}}
\newcommand{\D}{\mathsf{D}}
\newcommand{\M}{\mathcal{M}}
\begin{document}

\title{On the Properties of the Maximum of a Random Assignment Process\thanks{This work was supported by the Ministry of Education and Science of the Russian Federation, grant no.~075-15-2025-013.}}
\author{T.\,D.~Moskalenko\footnote{Saint Petersburg State University, Saint Petersburg, Russia; e-mail: \texttt{teamofey1982@gmail.com}}}
\maketitle

\begin{abstract}
We study the maximum of the random assignment process on rectangular matrices. We derive first-order asymptotics for the expected maximum, prove a law of large numbers under mild tail assumptions, and obtain exponential upper bounds for the probabilities of large deviations.\\

\textit{Keywords and phrases:} assignment process; extreme values; slowly varying functions; large deviations; Gumbel distribution.
\end{abstract}

\section{Introduction}\label{sec:intro}
% !TEX root = main_en.tex
% English version of: 1 - Введение.tex
% Preserves math, labels, and citation keys.

Let $[n]$ denote the set $\{1,\dots,n\}$ of natural numbers.

A \emph{generalized permutation} is an injective mapping $\pi\colon [n]\to[m]$ with $m\ge n$.

Let
\(
\bigl(X_{ij}\bigr)_{
  \substack{1\leqslant i\le n\\ 1\leqslant j\le m}
}
\)
be an $n\times m$ array with $m\ge n$.

The \emph{assignment process} is the family of random variables
\(
\Bigl\{\,S(\pi)\coloneqq \sum_{k=1}^{n} X_{k,\pi(k)}\,\Bigr\},
\)
where $\pi\colon [n]\to[m]$ ranges over all generalized permutations.
We consider the assignment process for a random matrix in which the entries $X_{ij}$ are independent and identically distributed (i.i.d.) random variables. For notational convenience, we will use a random variable $X$ with the same distribution as the matrix entries, and finite sequences of independent random variables $X_1,\dots,X_n$ with the same law.

The assignment problem consists in studying the maximal and minimal values of the assignment process
\[
    \M_{n, m} \coloneqq \max\limits_{\pi} S(\pi), \quad \M_{n, m}^{-} \coloneqq \min\limits_{\pi} S(\pi).
\]

Here the quantities $X_{ij}$ are interpreted as costs, rewards, or weights, depending on the context of the problem. In the minimization problem they are treated as costs, whereas in the maximization problem they are interpreted as profits or quality scores.

The assignment problem with random costs has a rich history and numerous applications across mathematics and engineering: from modeling bipartite graphs in practical industrial settings to the design of low-complexity algorithms.

This formulation arises in auction problems, in resource allocation for wireless communication systems, and in charging-control problems for electric vehicles. We mention only a few modern examples. In wireless communication models where a set of users transmits over a common wireless channel, one must allocate channel resources among users. In the work of Bai et al.~\cite{BWZ10}, resource allocation for OFDMA (Orthogonal Frequency Division Multiple Access) is formulated as maximizing an assignment process with random pairwise link throughputs. In a subsequent paper, Bai et al.~\cite{BWLZ13} studied the outage probability in subchannel allocation, where an outage (i.e., a failed transmission) occurs when a user receives insufficient channel resources. A related problem was analyzed by Chen et al.~\cite{YYHL21}, who considered the joint design of resource-block allocation and the assignment of modulation-and-coding schemes.

Beyond wireless systems, the assignment problem is a natural model for organizing smart transportation via driver–passenger matching; see Ke et al.~\cite{KXYY20}, where a two-stage optimization scheme is also proposed.

For completeness, we recall a few classical theoretical results.

In~\cite{MP87}, Mezard and Parisi put forward the remarkable conjecture that if $X$ has the standard exponential distribution, then
\begin{equation}
    \lim\limits_{n\to \infty} \E \M_{n, n}^- = \frac{\pi^2}{6}.
    \label{eq:mp_h}
\end{equation}

That conjecture was later proved by Aldous~\cite{Al01}; moreover, it turned out that the result holds for all distributions of nonnegative random variables that have the same unit density at zero as the standard exponential distribution.
For the exponential distribution an exact formula was even obtained~\cite{NPS05}:
\[
    \E \M_{n, n}^- = \sum\limits_{k = 1}^{n} k^{-2}.
\]

Note that these results concern the minimum of variables whose distributions are bounded below.
For variables with unbounded distributions the situation is completely different.
A finite limit as in~\eqref{eq:mp_h} no longer exists, and one must examine the asymptotic behavior of extreme values.
In this case it is natural to consider the maximum rather than the minimum.
Thus, in~\cite{MS21} Mordant and Segers showed that if $X$ has the standard Gaussian distribution, then
\begin{equation}
    \E \M_{n,n} = n\cdot \sqrt{2 \log\log n} \cdot (1 + o(1)), \quad n\to \infty.
    \label{eq:gaussian-expectation}
\end{equation}

This result was generalized by Lifshits and Tadevosyan~\cite{LT22} to a broad class of random variables whose distributions have rapidly decaying and sufficiently regular tails at infinity.
The result below~\cite{LT22} plays a central role in the present paper.
We recall the following classical definition.

A function $L(\cdot)$ is said to be \textit{slowly varying} at zero if for any $x>0$
\[
    \lim\limits_{t \to 0} \frac{L(tx)}{L(t)} = 1.
\]

Define the \textit{upper quantile function} of the distribution of a random variable $X$ by
\[
    g(p) \coloneqq \inf \{r \colon \Prob(X \geqslant r) < p \}.
\]

\begin{theorem}[\cite{LT22}]
  \label{MT-expect-asy}
  Let $(X_{ij})$ be a random matrix of size $n\times n$ with i.i.d.\ entries.
  Assume that
  \begin{enumerate}
    \item $\E X^{-}<\infty$;
    \item $g(\cdot)$ is slowly varying at $0$ and $\lim_{p\to 0+} g(p)=+\infty$.
  \end{enumerate}
  Then
  \begin{equation}
    \E \M_{n,n} \;=\; n\, g\!\left(\frac{1}{n}\right)\bigl(1+o(1)\bigr),\qquad n\to\infty.
    \label{eq:general-expectation}
  \end{equation}
\end{theorem}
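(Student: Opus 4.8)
The plan is to prove the two one-sided bounds $\limsup_{n}\E\M_{n,n}/(ng(1/n))\le 1$ and $\liminf_{n}\E\M_{n,n}/(ng(1/n))\ge 1$ separately; the first is soft, the second carries the content. For the upper bound, every permutation $\pi$ of $[n]$ satisfies $S(\pi)=\sum_{k}X_{k,\pi(k)}\le\sum_{k=1}^{n}M^{(k)}$ with $M^{(k)}:=\max_{1\le j\le n}X_{kj}$, so $\M_{n,n}\le\sum_{k=1}^{n}M^{(k)}$; since the $M^{(k)}$ are i.i.d.\ copies of $M_{n}:=\max(X_{1},\dots,X_{n})$ it suffices to show $\E M_{n}\le g(1/n)(1+o(1))$. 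By the union bound and the layer-cake formula $\E M_{n}^{+}\le g(1/n)+n\,\E\bigl(X-g(1/n)\bigr)^{+}$, and $M_{n}^{-}\le X_{1}^{-}$ has finite mean, so the whole upper bound reduces to the estimate $n\,\E(X-g(1/n))^{+}=o(g(1/n))$. Here assumption~(2) enters in the guise of the classical fact that the asymptotic inverse of a slowly varying function diverging at $0$ is rapidly varying: the tail $\overline F(r):=\Prob(X\ge r)$ is rapidly varying of index $-\infty$ at $+\infty$, such a tail has mean residual life $e(a):=\E[X-a\mid X>a]=o(a)$, and combined with $n\,\overline F(g(1/n))=O(1)$ (immediate from the definition of $g$, up to atoms) this gives $n\,\E(X-g(1/n))^{+}=n\,\overline F(g(1/n))\,e(g(1/n))=o(g(1/n))$, as needed. (The same rapid variation also yields $M_{n}/g(1/n)\to1$ in probability and the full one-dimensional asymptotics $\E M_{n}=g(1/n)(1+o(1))$, of which only the upper half is used.)

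For the lower bound, fix a large constant $c\ge 1$ and set the threshold $t_{n}:=g(c/n)$, so that $\Prob(X\ge t_{n})\ge c/n$ and, by slow variation \emph{at the fixed scale $c$}, $t_{n}=g(1/n)(1+o(1))$. Let $H_{n}$ be the bipartite graph on $[n]\sqcup[n]$ with an edge $(i,j)$ whenever $X_{ij}\ge t_{n}$; its edges are independent, each present with probability $\ge c/n$, so $H_{n}$ stochastically dominates $G(n,n,c/n)$, whose maximum matching covers at least $(1-\delta(c))n$ left-vertices with probability tending to $1$, where $\delta(c)\to0$ as $c\to\infty$ (a classical fact about sparse random bipartite graphs, e.g.\ via the defect form of Hall's theorem together with the Karp--Sipser analysis). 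On that event fix a matching $\mu_{n}$ of size $m_{n}:=\lceil(1-\delta(c))n\rceil$, covering rows $R_{n}$ and columns $C_{n}$, and complete it to a permutation $\pi_{n}$ of $[n]$ by matching the residual $k_{n}\times k_{n}$ block ($k_{n}=n-m_{n}$) using only entries $\ge -g(1/n)$. Such a completion exists with probability tending to $1$: since $\Prob(X<-g(1/n))\le\E X^{-}/g(1/n)\to0$, a union bound over all $k_{n}\times k_{n}$ sub-blocks shows that with probability tending to $1$ every row and every column of every such block has at least $k_{n}/2$ entries $\ge -g(1/n)$, and a bipartite graph on $k_{n}+k_{n}$ vertices with minimum degree $\ge k_{n}/2$ has a perfect matching. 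On the intersection of these two high-probability events $\M_{n,n}\ge S(\pi_{n})\ge m_{n}t_{n}-k_{n}g(1/n)=ng(1/n)\bigl(1-2\delta(c)+o(1)\bigr)$, whereas on the complement $\M_{n,n}\ge S(\mathrm{id})\ge-\sum_{i}X_{ii}^{-}$, which by uniform integrability of $\tfrac1n\sum_{i}X_{ii}^{-}$ and $g(1/n)\to\infty$ contributes only $o(ng(1/n))$ to $\E\M_{n,n}$. Taking expectations and letting $n\to\infty$ gives $\liminf_{n}\E\M_{n,n}/(ng(1/n))\ge 1-2\delta(c)$ for every fixed $c$; letting $c\to\infty$ completes the proof of~\eqref{eq:general-expectation}.

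The main obstacle, and the point that dictates the shape of the argument, is that slow variation may \emph{not} be invoked at growing scales: $g(\lambda_{n}/n)/g(1/n)$ need not tend to $1$ when $\lambda_{n}\to\infty$, so one cannot afford to push the threshold down to the perfect-matching level $p\asymp(\log n)/n$. One must instead work at a fixed multiplicative scale $c/n$, where slow variation is legitimate, and only afterwards let $c\to\infty$ --- which is exactly why a \emph{near}-perfect matching theorem for $G(n,n,c/n)$, specifically the fact that its defect proportion $\delta(c)$ vanishes as $c\to\infty$, is the correct input rather than a perfect-matching statement. The remaining technical nuisances (the $\approx\delta(c)n$ leftover rows, and atoms in the definition of $g$) are handled, respectively, by routing through moderate entries using only $\E X^{-}<\infty$, and by routine $\pm\epsilon$ perturbations of thresholds.
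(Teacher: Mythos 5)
Your proposal is correct but reaches the result by a genuinely different route, principally in the lower bound. For the upper bound you use the same rowwise-maxima domination $\M_{n,n}\le\sum_{k}M^{(k)}$ as the paper, but where the paper simply cites the one-dimensional asymptotics $\E M_n=g(1/n)(1+o(1))$ (Lemma~\ref{lm:max-expect}), you rederive its upper half from the rapid variation of the tail and the mean-residual-life estimate $e(a)=o(a)$ --- correct and self-contained, but a detour. The substantive difference is the lower bound: the paper runs an elementary greedy construction (pick the maximum of each successive row among the still-available columns), which gives $\E\M_{n,n}\ge\sum_{k=1}^{n}\E M_k^{(k)}$ and then needs only slow variation at bounded ratios plus $\E X^-<\infty$ to absorb the first $\lfloor\delta n\rfloor$ terms; you instead threshold at $t_n=g(c/n)$, extract a near-perfect matching from the sparse random bipartite graph of above-threshold entries, and complete it through moderate entries via Hall's theorem. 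Your argument is sound --- the union bound over $k_n\times k_n$ sub-blocks works because $k_n=\Theta(n)$ and $\mathbb{P}(X<-g(1/n))\to 0$ beat the entropy factor $\binom{n}{k_n}^2$, and your observation that slow variation cannot be invoked at scales $\lambda_n/n$ with $\lambda_n\to\infty$ (forcing a fixed $c$ followed by $c\to\infty$) identifies exactly the obstruction that the greedy construction sidesteps automatically. The trade-off is that your route imports a nontrivial external fact, namely that the matching defect $\delta(c)$ of $G(n,n,c/n)$ vanishes as $c\to\infty$ (a Karp--Sipser-type result), whereas the paper's greedy scheme is self-contained given the one-dimensional lemma and extends verbatim to the rectangular case treated in Theorem~\ref{th:expect-asy}.
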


Formula~\eqref{eq:gaussian-expectation} is a special case of~\eqref{eq:general-expectation}.

The present paper continues this line of research around Theorem~\ref{MT-expect-asy}.
We extend \eqref{eq:general-expectation} to rectangular matrices (see Theorem~\ref{th:expect-asy}), establish a corresponding law of large numbers (Theorem~\ref{th:lln}), and obtain exponential bounds for large‑deviation probabilities (Theorem~\ref{th:exponential-bound-exp-tail}).

Rectangular matrices were also studied earlier in~\cite{NPS05,AS02,BCR02,CS99,LW02,W09}.

\section{Asymptotic behaviour of the expectation}\label{sec:expect}
The next theorem generalizes Theorem~\ref{MT-expect-asy} to the case of rectangular matrices.

\begin{theorem}
    \label{th:expect-asy}
    Consider a sequence of random matrices $(X_{ij})$ of size $n\times m(n)$ with $m\geqslant n$, whose entries are i.i.d.\ random variables.
    Assume that
    \begin{enumerate}
        \item $\E X^{-} < \infty$,
        \item $g(\cdot)$ is slowly varying at $0$ and $\lim\limits_{p\to 0+} g(p) = +\infty$.
    \end{enumerate}

    Then
    \[
        \E\M_{n, m} = n\cdot g\!\left(\frac{1}{m}\right)(1 + o(1)), \qquad n\rightarrow \infty.
    \]
\end{theorem}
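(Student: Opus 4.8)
The plan is to bracket $\E\M_{n,m}$ between matching upper and lower bounds, both asymptotic to $n\,g(1/m)$.

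\textbf{Upper bound.} I would start from the trivial domination $\M_{n,m}\le\sum_{i=1}^{n}\max_{1\le j\le m}X_{ij}$, so by linearity it suffices to show $\E Y\le(1+o(1))\,g(1/m)$, where $Y:=\max_{1\le j\le m}X_{1j}$. The negative part is harmless: $Y\ge X_{11}$ gives $\E Y^-\le\E X^-<\infty=o(g(1/m))$, since $g(1/m)\to\infty$. For $\E Y^+=\int_0^\infty\Prob(Y\ge r)\,dr$, split the integral at $r=g(1/m)$: below $g(1/m)$ bound the integrand by $1$, and above $g(1/m)$ use the union bound $\Prob(Y\ge r)\le m\,\Prob(X\ge r)$ together with $\Prob(X\ge r)<1/m$ (which holds for $r>g(1/m)$ by definition of $g$). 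This yields $\E Y^+\le g(1/m)+m\,\E[(X-g(1/m))^+]$, and it remains to see that $m\,\E[(X-g(1/m))^+]=o(g(1/m))$. Rewriting $\E[(X-a)^+]=\int_a^\infty\Prob(X\ge r)\,dr$ and passing to the quantile scale, this amounts to $m\int_0^{1/m}g(s)\,ds-g(1/m)=o(g(1/m))$, which follows from Karamata's theorem ($\int_0^s g\sim s\,g(s)$ for $g$ slowly varying at $0$). The one delicate point is making this quantile change of variables rigorous when $g$ is merely non-increasing, possibly with jumps.

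\textbf{Lower bound.} Fix a large constant $c>0$ and put $t=t_m:=g(c/m)$; then $t_m\sim g(1/m)$ by slow variation, and $p:=\Prob(X\ge t_m)\ge c/m$ (since $r\mapsto\Prob(X\ge r)$ is left-continuous, $\Prob(X\ge g(q))\ge q$ for every $q$). View $\{(i,j):X_{ij}\ge t_m\}$ as a bipartite graph with i.i.d.\ $\mathrm{Bernoulli}(p)$ edges, and run the left-to-right greedy matching on rows: for $i=1,\dots,n$, assign row $i$ to the smallest unused column $j$ with $X_{ij}\ge t_m$ if one exists (call row $i$ \emph{matched}), otherwise postpone it. Because $m\ge n$, the matched rows occupy at most $n$ columns, so the $N$ postponed rows can afterwards be injected into the leftover columns, giving a generalized permutation $\pi$ with $\M_{n,m}\ge S(\pi)\ge(n-N)\,t_m-\sum_{i\text{ unmatched}}X_{i,\pi(i)}^-$. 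The quantitative ingredient is $\E N<n/c$, uniformly over $m\ge n$: conditionally on the set $U_i$ of columns used before step $i$, row $i$ stays unmatched with probability $(1-p)^{\,m-|U_i|}\le(1-p)^{\,m-i+1}$, and summing the resulting geometric series — with a short case split according to whether $m\le cn$ or $m>cn$ — gives the bound.

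\textbf{The main obstacle, and conclusion.} What remains is to bound $\E\bigl[\sum_{i\text{ unmatched}}X_{i,\pi(i)}^-\bigr]$ using only the first moment hypothesis $\E X^-<\infty$, and this is the step I expect to require the most care. The key is a conditional-exchangeability argument: the set $U_i$ and the entire continuation of the algorithm after step $i$ — hence the leftover columns, hence $\pi(i)$ for an unmatched row $i$ — are measurable with respect to the rows other than $i$, while the event ``$i$ is unmatched'' says precisely that every entry of row $i$ outside $U_i$ is $<t_m$, and $\pi(i)$ is one of those columns. So, conditionally, $X_{i,\pi(i)}$ is distributed as $X\mid X<t_m$, whence $\E\bigl[\sum_{i\text{ unmatched}}X_{i,\pi(i)}^-\bigr]=\E N\cdot\E[X^-\mid X<t_m]\le 2\,\E X^-\cdot\E N$ for $m$ large (since $\Prob(X\ge t_m)\to0$). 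Together with the displayed bound and $\E N<n/c$ this gives $\E\M_{n,m}\ge t_m\,n(1-1/c)-2\,\E X^-\cdot n/c$; dividing by $n\,g(1/m)$ and sending $n\to\infty$ (so that $t_m/g(1/m)\to1$ and $g(1/m)\to\infty$) yields $\liminf_{n}\E\M_{n,m}/(n\,g(1/m))\ge 1-1/c$, and finally $c\to\infty$ matches the upper bound. (As a by-product this re-derives Theorem~\ref{MT-expect-asy}, the case $m=n$.)
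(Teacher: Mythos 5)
Your proof is correct, but the lower bound follows a genuinely different route from the paper's. The paper's argument is a greedy maximum-selection: pick the largest accessible entry in each successive row, which bounds $\M_{n,m}$ below by a sum of independent sample maxima $\sum_{k=m-n+1}^{m}M_k^{(k)}$, and then invokes Lemma~\ref{lm:max-expect} (the asymptotics $\E M_k\sim g(1/k)$ from~\cite{LT22}) together with slow variation to evaluate the sum; the same lemma also disposes of the upper bound in one line, and the intermediate two-sided bound by sums of independent maxima is reused later for the law of large numbers and the exponential bounds. You instead threshold at $t_m=g(c/m)$, run a greedy Bernoulli matching, control the number $N$ of unmatched rows (your bound $\E N<n/c$ with the case split at $m=cn$ does check out: for $m\le cn$ the geometric tail gives $(m/c)e^{-c(1-n/m)}\le n/c$ since $xe^{-c(1-1/x)}\le1$ on $[1,c]$, and for $m>cn$ the termwise bound gives $ne^{-(c-1)}$), and handle the unmatched rows via the conditional-exchangeability observation that, given the other rows and the event that row $i$ is unmatched, $X_{i,\pi(i)}$ is distributed as $X$ truncated to $\{X<t_m\}$ — a correct and rather elegant way to use only $\E X^-<\infty$. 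What your route buys is self-containedness: you re-derive the upper half of Lemma~\ref{lm:max-expect} via Karamata (the quantile change of variables you flag is indeed the only delicate point, and is handled by noting that $\Prob(X>g(1/m))\le 1/m$ and that the integrand vanishes on any atom at $g(1/m)$), and your lower bound never needs the expectation asymptotics for $M_k$ at all. What it loses is the structural by-product $\sum_{k=m-n+1}^{m}M_k^{(k)}\le\M_{n,m}\le\sum_{k=1}^{n}M_m^{(k)}$, which the paper leans on heavily in Sections~\ref{sec:lbn} and~\ref{sec:exp}; your matching-based minorant would not serve those later purposes as directly.
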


Denote by $M_n \coloneqq \max\limits_{1 \leqslant k \leqslant n} X_k$ the maximum of a sample of size $n$ of i.i.d.\ copies of $X$.
Our estimates rely on the following auxiliary result from~\cite{LT22}.

\begin{lemma}[\cite{LT22}]
    \label{lm:max-expect}
    Let $\{X_{k}\}$ be i.i.d.
    Assume that
    \begin{enumerate}
        \item $\E X^{-} < \infty$,
        \item $g(\cdot)$ is slowly varying at $0$ and $\lim\limits_{p\to 0+} g(p) = +\infty$.
    \end{enumerate}

    Then
    \[
        \E M_{n} = g\left(\frac{1}{n}\right)(1 + o(1)), \quad n\rightarrow \infty.
    \]
\end{lemma}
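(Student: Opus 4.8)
\textbf{Proof plan for Lemma~\ref{lm:max-expect}.}
The plan is to establish the two matching bounds $\E M_n \geqslant g(1/n)(1+o(1))$ and $\E M_n \leqslant g(1/n)(1+o(1))$ separately, exploiting the definition of the upper quantile function $g$ together with the slow variation at $0$. First I would record the two defining properties of $g(p) = \inf\{r \colon \Prob(X \geqslant r) < p\}$: for every $\varepsilon > 0$ we have $\Prob\bigl(X \geqslant g(p)(1-\varepsilon)\bigr) \geqslant p$ and $\Prob\bigl(X \geqslant g(p)(1+\varepsilon)\bigr) < p$ (for small $p$, using $g(p)\to+\infty$ so that multiplicative perturbations are meaningful). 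These one-sided tail estimates at the level $p = 1/n$ are the workhorses for both directions.

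For the lower bound I would use the elementary inequality $\E M_n \geqslant a\,\Prob(M_n \geqslant a)$ for any threshold $a > 0$, choosing $a = g(1/n)(1-\varepsilon)$. Then $\Prob(M_n \geqslant a) = 1 - \bigl(1 - \Prob(X \geqslant a)\bigr)^n \geqslant 1 - \exp\bigl(-n\,\Prob(X \geqslant a)\bigr)$, and since $\Prob(X \geqslant a) \geqslant 1/n$ by the first defining property, the exponent is bounded below by a positive constant, so $\Prob(M_n \geqslant a) \geqslant c > 0$. This yields $\E M_n \geqslant c\,g(1/n)(1-\varepsilon)$; to sharpen the constant $c$ up to $1$ one sprinkles the threshold slightly below, i.e.\ replaces the single cutoff by an integration $\E M_n \geqslant \int_0^{g(1/n)(1-\varepsilon)} \Prob(M_n \geqslant t)\,dt$ and observes that $\Prob(M_n \geqslant t)\to 1$ for $t$ a fixed fraction of $g(1/n)$, because at such levels $n\,\Prob(X\geqslant t)\to\infty$ by slow variation. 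Combined with $\varepsilon\to 0$ this gives $\E M_n \geqslant g(1/n)(1+o(1))$.

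For the upper bound I would split the expectation as $\E M_n = \E M_n^{+} - \E M_n^{-}$ and first dispose of the negative part: $\E M_n^{-}\leqslant \E X^{-}<\infty$ is bounded by assumption, hence negligible against $g(1/n)\to+\infty$. For the positive part I use the layer-cake formula
\[
    \E M_n^{+} = \int_0^\infty \Prob(M_n \geqslant t)\,dt \leqslant A + \int_A^\infty \min\bigl(1,\, n\,\Prob(X \geqslant t)\bigr)\,dt,
\]
where I split the range at $A = g(1/n)(1+\varepsilon)$ and bound $\Prob(M_n\geqslant t)\leqslant n\,\Prob(X\geqslant t)$ via a union bound on the tail portion. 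The contribution up to $A$ is at most $A = g(1/n)(1+\varepsilon)$, which is of the desired order. The task then reduces to showing the tail integral $\int_A^\infty n\,\Prob(X\geqslant t)\,dt = o\bigl(g(1/n)\bigr)$.

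The main obstacle will be precisely this tail integral. The key analytic input is the \emph{de Bruijn / Potter-type} consequence of slow variation of $g$ at $0$: if $g$ is slowly varying at $0$, then its generalized inverse, the tail function $\Prob(X\geqslant \cdot)$, decays faster than any power, so that $t\,\Prob(X\geqslant t)$ is eventually much smaller than $g^{-1}$-scale contributions. Concretely I would change variables $t = g(s)$ (so $s$ ranges down to $0$) to rewrite $\int_A^\infty \Prob(X\geqslant t)\,dt$ as an integral $\int_0^{1/A'} g'(s)\,s\,ds$-type expression and then invoke Karamata's theorem for slowly varying functions to show this integral is asymptotically negligible compared with $g(1/n)/n$. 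Handling the interchange of asymptotics here — making the $o(\cdot)$ uniform and justifying the Karamata estimate near the singularity at $0$ — is the delicate technical heart of the proof; everything else is elementary. Letting $\varepsilon\to 0$ at the end merges the two bounds into the claimed asymptotic equivalence $\E M_n = g(1/n)(1+o(1))$.
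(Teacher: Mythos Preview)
The paper does not prove Lemma~\ref{lm:max-expect}; it is quoted verbatim as an auxiliary result from \cite{LT22} and used without argument. There is therefore no in-paper proof to compare your attempt against.

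As for the soundness of your plan, the overall architecture is correct and is essentially the standard route. Two small points are worth tightening. First, the inequality $\E M_n \geqslant a\,\Prob(M_n \geqslant a)$ is only valid for nonnegative variables; you should write $\E M_n \geqslant a\,\Prob(M_n \geqslant a) - \E M_n^{-}$ and use $\E M_n^{-} \leqslant \E X^{-} < \infty$, which is negligible against $g(1/n)\to\infty$. Second, for the tail integral $\int_A^\infty n\,\overline{F}_X(t)\,dt = o\bigl(g(1/n)\bigr)$ with $A = (1+\varepsilon)g(1/n)$, the cleanest execution is not the change of variables you sketch but the dual Potter bound: slow variation of $g$ at $0$ is equivalent to rapid variation of $\overline{F}_X$ at $\infty$, which gives, for any $\rho>1$ and large $t$, $\overline{F}_X(s) \leqslant C(s/t)^{-\rho}\overline{F}_X(t)$ for $s\geqslant t$. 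Integrating from $A$ and using $n\,\overline{F}_X(A) < 1$ then yields $n\int_A^\infty \overline{F}_X(t)\,dt \leqslant \tfrac{C}{\rho-1}\,A$, and sending $\rho\to\infty$ gives the $o(g(1/n))$ you need. With these fixes your argument goes through.
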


\begin{proof}[Proof of Theorem~\ref{th:expect-asy}]
\emph{Upper bound.}
For any generalized permutation $\pi$ the sum $\sum_{k=1}^n X_{k,\pi(k)}$ does not exceed the sum of the rowwise maxima.
Therefore, by Lemma~\ref{lm:max-expect},
\[\E\M_{n,m} \leqslant \sum_{k = 1}^{n}\E M_{m} = n \cdot g\left(\frac{1}{m}\right)(1 + o(1)).\]

\emph{Lower bound.}
We use the greedy construction of a generalized permutation as in~\cite{LT22}, with a minor modification.
In the first row pick the maximum — it is distributed as $M_m$.
Restrict the problem to the \emph{accessible} part of the table: all rows except the first one and all columns except the one where the maximum was found.
At the $k$-th step, among the remaining $(n-k)$ rows we consider the first one.
In that row there are $(m-k)$ accessible entries, and the greedy algorithm picks their maximum.
After that we remove from consideration the column and the row where this entry was found, and so on.

At each step, in the ``accessible'' subtable the entries remain independent and identically distributed, with the same law as $X$.
As a result the algorithm produces a generalized permutation whose value does not exceed the maximum.
Hence
\[
    \E\M_n \geqslant \sum_{k = m - n + 1}^{m}\E M_k^{(k)}.
\]
Here and below the superscript $(k)$ indicates that $M_k^{(k)}$ is the maximum in an \emph{independent} sample of size $k$ with the same distribution as $X$.

Furthermore, for any $k$ one has $M_k^{(k)} \geqslant -X_1^{-}$ almost surely, and therefore $\E M_k^{(k)} \geqslant - \E X_1^-$.
Since $g$ is slowly varying at $0$, for any $\delta > 0$ we obtain
\begin{equation*}
    \begin{aligned}[b]
    \sum_{k = m - n + 1}^{m}\E M_k^{(k)}
    &=
    \sum_{k = m - n + 1}^{m - n + \floor{\delta n}}\E M_k^{(k)} +
    \sum_{k = m - n + \floor{\delta n} + 1}^{m}\E M_k^{(k)} \\
    &\geqslant
    -\sum_{k = 1}^{\floor{\delta n}}\E X_1^-
    +\sum_{k = m - n + \floor{\delta n} + 1}^{m}
    g\lr*{\frac{1}{k}}(1 + o(1)) \\
    &= O(n) + (1-\delta) n\cdot g\left(\frac{1}{m}\right)(1 + o(1)).
    \end{aligned}
\end{equation*}

Since $g(0+) = +\infty$, the first term is asymptotically negligible compared to the second one.
Letting $\delta \to 0$ yields
\begin{equation}
    \E\M_{n, m} \geqslant \sum_{k = m - n + 1}^{m}\E M_k^{(k)} \geqslant n\cdot g\left(\frac{1}{m}\right)(1 + o(1)).
    \label{eq:sum-expect-lower-bound}
\end{equation}
\end{proof}

\begin{remark}
    The case $m < n$ can be obtained by transposing the matrix.
    The asymptotics in that case is
    \[
    \E\M_{n,m} = m \cdot g\lr*{\frac{1}{n}}(1 + o(1)), \quad n \rightarrow \infty.
    \]
\end{remark}

\section{Law of Large Numbers}\label{sec:lbn}
\begin{theorem}
    \label{th:lln}
    Let $(X_{ij})$ be a random matrix of size $n\times m$ with i.i.d.\ entries, and for every $n$ assume $m\ge n$.
    Suppose that
    \begin{enumerate}
        \item $\E X^{-} < \infty$,
        \item $g(\cdot)$ is slowly varying at $0$ and $\lim\limits_{p\to 0+} g(p) = +\infty$.
    \end{enumerate}
    Then the following convergence in probability holds:
    \[
        \frac{\M_{n,m}}{n g\lr*{\frac{1}{m}}}   \xrightarrow{\Prob} 1, \quad n \to \infty.
    \]
\end{theorem}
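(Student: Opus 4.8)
The plan is to combine the first-moment asymptotics of Theorem~\ref{th:expect-asy} with a concentration argument. Since $\E\M_{n,m}=n\,g(1/m)(1+o(1))$, it suffices to show that $\M_{n,m}$ concentrates around its mean on the scale $n\,g(1/m)$, i.e.\ that $\Var$ or, more robustly, suitable tail probabilities of $\M_{n,m}-\E\M_{n,m}$ are $o\bigl((n\,g(1/m))^2\bigr)$, respectively $o(1)$. The natural tool is a bounded-differences (Azuma--McDiarmid) inequality applied to $\M_{n,m}$ viewed as a function of the independent entries $(X_{ij})$; the obstacle is that changing one entry can change $\M_{n,m}$ by an unbounded amount when $X$ is unbounded, so a direct application fails and one must truncate.

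\textbf{Step 1: Truncation.} Fix a threshold $T_n\to+\infty$ to be chosen, and set $\widetilde X_{ij}\coloneqq \min(X_{ij},T_n)$ (keeping the lower tail intact, using $\E X^-<\infty$). Let $\widetilde\M_{n,m}$ be the corresponding maximum. One shows $\Prob(\M_{n,m}\ne\widetilde\M_{n,m})\to 0$: on the optimal assignment the rows that exceed $T_n$ are rare, and since the greedy/optimal value is already of order $n\,g(1/m)$ while a single very large entry exceeding, say, $g(1/m)^{1+\varepsilon}$ occurs with probability at most $nm\cdot\Prob(X\ge T_n)$, choosing $T_n$ slightly above $g(1/m)$ but with $nm\,\Prob(X\ge T_n)\to 0$ makes truncation harmless. (Here one uses that $g$ is slowly varying, so $g(1/(nm))\sim g(1/m)$, hence $T_n$ can be taken as $g$ of a slightly smaller argument.) It remains to prove the LLN for $\widetilde\M_{n,m}$.

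\textbf{Step 2: Bounded differences for the truncated maximum.} Changing one entry $\widetilde X_{ij}\in(-\infty,T_n]$ changes $\widetilde\M_{n,m}$ by at most $T_n+ (\text{contribution of }X^-)$; more carefully, by the usual argument for assignment maxima, altering a single coordinate changes $\widetilde\M_{n,m}$ by at most $T_n - (\text{the value it replaces})$, and one can bound the per-coordinate fluctuation by a deterministic constant $c_n$ of order $T_n$ (on the event that no entry is extremely negative, which again has negligible complement since $\E X^-<\infty$ gives $\Prob(\min_{ij}X_{ij}<-a_n)\to0$ for $a_n$ growing slowly). McDiarmid's inequality then gives, for any $t>0$,
\[
  \Prob\bigl(|\widetilde\M_{n,m}-\E\widetilde\M_{n,m}|\ge t\bigr)\le 2\exp\!\left(-\frac{2t^2}{nm\,c_n^2}\right).
\]
Taking $t=\varepsilon\,n\,g(1/m)$, the exponent is of order $n^2 g(1/m)^2/(nm\,T_n^2)=n\,g(1/m)^2/(m\,T_n^2)$. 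With $T_n\asymp g(1/m)$ (up to slowly varying corrections) this is of order $n/m$, which is \emph{not} guaranteed to tend to infinity when $m\gg n$.

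\textbf{Step 3: Handling the rectangular case $m\gg n$.} This is the main obstacle. When $m$ is much larger than $n$, the variance bound from McDiarmid along $nm$ coordinates is too weak. The fix is to exploit the structure: the optimal assignment uses only $n$ columns out of $m$, so one should condition more cleverly or pass to a subadditivity / row-exposure martingale with only $n$ steps. Concretely, expose the rows one at a time and let $D_k=\E[\widetilde\M_{n,m}\mid \text{rows }1..k]-\E[\widetilde\M_{n,m}\mid \text{rows }1..k-1]$; each $|D_k|\le T_n$ (one row's contribution to an assignment is a single truncated entry), so the martingale has only $n$ increments bounded by $T_n$, giving
\[
  \Prob\bigl(|\widetilde\M_{n,m}-\E\widetilde\M_{n,m}|\ge t\bigr)\le 2\exp\!\left(-\frac{t^2}{2n T_n^2}\right),
\]
and with $t=\varepsilon n\,g(1/m)$ and $T_n\asymp g(1/m)$ the exponent is of order $n\varepsilon^2\to\infty$. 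Thus the row-exposure martingale, not the entrywise one, is the right object. Combining Steps 1--3 with Theorem~\ref{th:expect-asy} yields $\Prob\bigl(|\M_{n,m}-\E\M_{n,m}|\ge\varepsilon\,n\,g(1/m)\bigr)\to 0$, and dividing by $n\,g(1/m)$ gives the claim. The only delicate point to get right is that the row-exposure increment is genuinely bounded by $\approx T_n$: replacing all entries of one row changes the value of any fixed assignment by at most the change in that row's single chosen entry, which lies in $[-a_n,T_n]$, so $|D_k|\le T_n+a_n$ with $a_n=o(g(1/m))$, which suffices.
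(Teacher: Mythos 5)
The row-exposure idea in Step 3 is the right reflex for the rectangular case, but the argument as written has a genuine gap in its treatment of the negative tails, and the gap is not cosmetic. For the Azuma/McDiarmid bound you need the martingale increments to be bounded on \emph{both} sides: changing row $k$ changes the optimum by at most $\widetilde X_{k,\pi(k)}-\widetilde X'_{k,\pi(k)}$, and while the first term is $\le T_n$ after upper truncation, the second can be arbitrarily negative. Your fix is to restrict to the event $\{\min_{ij}X_{ij}\ge -a_n\}$ with $a_n$ ``growing slowly'', but under the sole hypothesis $\E X^-<\infty$ this event does not have probability tending to one unless $a_n$ is taken of order $nm$ (Markov plus a union bound over $nm$ entries; the tail $\Prob(X<-a)\sim a^{-1}(\log a)^{-2}$ shows this is essentially sharp). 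Since $g$ is slowly varying, $g(1/m)=o(m^{\delta})$ for every $\delta>0$, so such an $a_n$ dwarfs $n\,g(1/m)$; with increments bounded only by $T_n+a_n$ the Azuma exponent $\varepsilon^2 n^2 g(1/m)^2/\bigl(n(T_n+a_n)^2\bigr)$ tends to $0$ rather than to $+\infty$. One can rescue the upper-tail estimate by truncating below at $0$ (this only increases $\M_{n,m}$), but that trick points the inequality the wrong way for the lower tail, so the concentration route does not deliver $\Prob\bigl(\M_{n,m}\le(1-\varepsilon)n\,g(1/m)\bigr)\to0$ without further ideas.

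Two smaller remarks. The claim $g(1/(nm))\sim g(1/m)$ is not a consequence of slow variation, since the ratio of the two arguments is $1/n\to0$ rather than a fixed constant; what is true, by the Potter bound, is $g(1/(n^2m))\le A\,n^{2\delta}g(1/m)$ for all large $n,m$, and this weaker estimate does suffice for your exponent, so here the defect is one of formulation rather than substance. More importantly, the paper proves the theorem without any concentration inequality for $\M_{n,m}$ itself: it sandwiches $\M_{n,m}$ between $\sum_{k=m-n+1}^{m}M_k^{(k)}$ (the greedy construction) and $\sum_{k=1}^{n}M_m^{(k)}$ (the row maxima), both sums of \emph{independent} sample maxima, and applies Chebyshev's inequality to each sum using the uniform second-moment bound $\D M_r\le C_2\,g(1/r)^2$ supplied by Lemma~\ref{lm:upper-bound}. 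Since your plan already invokes the greedy construction (via Theorem~\ref{th:expect-asy}) to control $\E\M_{n,m}$ from below, that sandwich is available to you for free, and Chebyshev on the two independent sums handles both tails, the unbounded negative part, and the rectangular regime with no truncation at all.
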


\begin{lemma}
    \label{lm:upper-bound}
    Assume that
    \begin{enumerate}
        \item $\E X^{-} < \infty$,
        \item $g(\cdot)$ is slowly varying at $0$ and $\lim\limits_{p\to 0+} g(p) = +\infty$.
    \end{enumerate}
    Then for any $q\in\N$ and $s>0$, for all sufficiently large $n\in\N$, and for $\alpha>2^q$ one has
    \[
        \Prob\lr*{\left\lvert M_n \right\rvert^q \geqslant \alpha g\lr*{\frac{1}{n}}^q} \leqslant  \lr*{\frac{\alpha^{1/q}}{2}}^{-s} +  \lr*{\frac{\E X^-}{\alpha^{1/q}}}^s =\colon C_1\alpha^\frac{-s}{q}.
    \]
\end{lemma}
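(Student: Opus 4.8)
The plan is to decompose the event $\{\,|M_n|^q\ge\alpha\,g(1/n)^q\,\}$, which for $n$ so large that $g(1/n)>0$ coincides with $\{\,|M_n|\ge\alpha^{1/q}g(1/n)\,\}=\{M_n\ge\alpha^{1/q}g(1/n)\}\cup\{M_n\le-\alpha^{1/q}g(1/n)\}$, and to bound the two one-sided tails of $M_n$ by $(\alpha^{1/q}/2)^{-s}$ and by $(\E X^-/\alpha^{1/q})^s$ respectively; the identity $C_1=2^s+(\E X^-)^s$ then just reflects $(\alpha^{1/q}/2)^{-s}=2^s\alpha^{-s/q}$ and $(\E X^-/\alpha^{1/q})^s=(\E X^-)^s\alpha^{-s/q}$.

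For the upper tail I would first reduce to a single entry: the union bound gives $\Prob(M_n\ge r)\le n\,\Prob(X\ge r)$, and the definition of the upper quantile function gives $\Prob(X\ge r)<p$ whenever $r>g(p)$. Choosing $p=2^s\alpha^{-s/q}/n$, it then suffices to verify the deterministic inequality
\[
    g\!\left(\frac{2^s\alpha^{-s/q}}{n}\right)<\alpha^{1/q}\,g\!\left(\frac1n\right),
\]
since this yields $\Prob(M_n\ge\alpha^{1/q}g(1/n))\le n\cdot 2^s\alpha^{-s/q}/n=(\alpha^{1/q}/2)^{-s}$. To get that inequality I would invoke a Potter-type bound for the slowly varying (and, being monotone, measurable) function $g$: with $\varepsilon=1/(2s)$ there is $p_0>0$, depending only on $s$ and on the law of $X$, such that $g(p)\le 2(q/p)^{\varepsilon}g(q)$ for all $0<p\le q\le p_0$. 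Applied with $p=2^s\alpha^{-s/q}/n\le q=1/n$, which is legitimate once $1/n\le p_0$ (a restriction on $n$ that is uniform in $\alpha$), this gives $g\bigl(2^s\alpha^{-s/q}/n\bigr)\le 2\,(\alpha^{1/q}/2)^{1/2}\,g(1/n)$, and the elementary inequality $2(\alpha^{1/q}/2)^{1/2}<\alpha^{1/q}$, valid precisely because $\alpha^{1/q}>2$, closes the case.

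For the lower tail I would use independence: for $r>0$ one has $\Prob(M_n\le-r)=\Prob(X\le-r)^n=\Prob(X^-\ge r)^n\le(\E X^-/r)^n$ by Markov's inequality. Taking $r=\alpha^{1/q}g(1/n)$ and $n$ large enough that $g(1/n)\ge\max(1,\E X^-)$ --- once more a threshold independent of $\alpha$ --- the base does not exceed $\min\{1/2,\; \E X^-/\alpha^{1/q}\}$. If $\E X^-/\alpha^{1/q}\ge1$ then $(\E X^-/\alpha^{1/q})^s\ge1$ and there is nothing to prove; otherwise the base lies strictly below $1$, hence only decreases when raised to the $n$-th power, and since $n\ge s$ for $n$ large we obtain $\Prob(M_n\le-r)\le(\E X^-/\alpha^{1/q})^n\le(\E X^-/\alpha^{1/q})^s$. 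Adding the two tail bounds gives the assertion.

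The step I expect to be the main obstacle is the upper tail, specifically the uniformity in $\alpha$: one must convert the statement that $M_n$ is of order $\alpha^{1/q}g(1/n)$ into a genuine quantile comparison and make the slow-variation estimate $g(c/n)\lesssim g(1/n)$, with $c=2^s\alpha^{-s/q}\in(0,1)$, hold for every admissible $\alpha$ with one and the same threshold on $n$. The plain definition of slow variation only controls each fixed $\alpha$ (equivalently $c$ ranging over a compact subset of $(0,1)$), which is why the quantitative Potter bound is the right tool; the remaining ingredients --- the union bound, Markov's inequality, and collecting the various thresholds into a single ``sufficiently large $n$'' --- are routine.
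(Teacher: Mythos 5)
Your proof is correct and follows essentially the same route as the paper: a union bound plus a Potter-type estimate comparing $g\bigl(2^s\alpha^{-s/q}/n\bigr)$ with $\alpha^{1/q}g(1/n)$ for the upper tail, and independence plus Markov's inequality (using $g(1/n)\geqslant 1$ and $n\geqslant s$) for the lower tail. The only cosmetic difference is your choice of Potter exponent $\delta=1/(2s)$ with the slack inequality $2(\alpha^{1/q}/2)^{1/2}<\alpha^{1/q}$, where the paper takes $\delta=1/s$ and gets the comparison factor $\alpha^{1/q}$ exactly.
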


The function $g(\cdot)$ varies more slowly than any power function. We will use the following \emph{Potter bound}~\cite[Theorem~1.5.6]{BGT}.

\begin{statement}[Potter bound]
    Let $g(\cdot)$ be slowly varying at $0$.
    Then for any $\delta>0$ and $A>1$ there exists $x_0>0$ such that for all $0<x,y<x_0$,
    \[
        \frac{g\lr*{x}}{g\lr*{y}} \leqslant A \cdot \max\set*{
            \lr*{\frac{x}{y}}^{\delta},\lr*{\frac{y}{x}}^{\delta} }.
    \]
\end{statement}

\begin{proof}[Proof of Lemma~\ref{lm:upper-bound}]
    Apply the Potter bound with $A=2$, $\delta=\frac{1}{s}$, $x=\frac{2^{s}}{n\,\alpha^{s/q}}$, $y=\frac{1}{n}$ for large enough $n$ and $\alpha>2^{q}$.
    Then
    \begin{gather*}
        \frac{g\lr*{ \frac{2^s}{n\alpha^{s/q}} } }{g\left(\frac{1}{n}\right) } \leqslant 2\cdot \max\set*{\frac{\alpha^{1/q}}{2}, \frac{2}{\alpha^{1/q}}} = \alpha^{1/q}.
    \end{gather*}

    By the definition of $g$,

    \begin{equation}
        \label{eq:first-bound}
        \Prob\lr*{X \geqslant \alpha^{1/q} \cdot g\left(\frac{1}{n}\right) } \leqslant
        \Prob\lr*{X \geqslant g\lr*{ \frac{2^s}{n\alpha^{s/q}} }} =
        \frac{\lr*{\frac{\alpha^{1/q}}{2}}^{-s}} {n}.
    \end{equation}

    Since $g\!\lr*{\tfrac{1}{n}}>1$ for $n$ large enough, we have
    \begin{equation}
        \label{eq:second-bound}
        \Prob\lr*{ X \leqslant -\alpha^{1/q}  g\lr*{\frac{1}{n}}}
        = \Prob\lr*{ X^{-} \geqslant \alpha^{1/q}  g\lr*{\frac{1}{n}}}
        \leqslant \Prob\lr*{  X^{-} \geqslant \alpha^{1/q} }
        \leqslant  \frac{\E  X^{-}}{\alpha^{1/q}}.
    \end{equation}

    Using \eqref{eq:first-bound}–\eqref{eq:second-bound} and additionally assuming $n\ge s$, we estimate the maximum:
    \begin{align*}
        \Prob \left(\left\lvert M_n \right\rvert^q \geqslant \alpha g\left(\frac{1}{n}\right)^q \right)
        &= \Prob \left( \left\lvert M_n \right\rvert \geqslant \alpha^{1/q} g\left( \frac{1}{n} \right) \right) \\
        &= \Prob \left( M_n \geqslant \alpha^{1/q} g\left( \frac{1}{n} \right) \right)
         + \Prob \left( M_n \leqslant -\alpha^{1/q} g\left( \frac{1}{n} \right) \right) \\
        &\leqslant n \cdot \Prob\left( X \geqslant \alpha^{1/q} g\left( \frac{1}{n} \right) \right)
         + \Prob\left( X \leqslant -\alpha^{1/q} g\left( \frac{1}{n} \right) \right)^n \\
        &\leqslant \left( \frac{\alpha^{1/q}}{2} \right)^{-s}
         + \Prob\left( X \leqslant -\alpha^{1/q} g\left( \frac{1}{n} \right) \right)^s \\
        &\leqslant \left( \frac{\alpha^{1/q}}{2} \right)^{-s}
         + \left( \frac{\E X^-}{\alpha^{1/q}} \right)^s = C_1 \alpha^{-s/q}.
    \end{align*}
\end{proof}

\begin{proof}[Proof of Theorem~\ref{th:lln}]
    Apply Lemma~\ref{lm:upper-bound} with $q=2$ and $s=6$. Then for $\alpha>2^{q}=4$ and for all sufficiently large $r\in\N$ we have
    
    \begin{equation}
        \begin{aligned}[b]
            \frac{\D\lr*{M_{r}}}{g\lr*{\frac{1}{{r}}}^2}
            &\leqslant
            \frac{\E\lr*{M_{r}^2}}{g\lr*{\frac{1}{{r}}}^2}  \\
            &= \int\limits_{0}^{\infty}\Prob \lr*{M_{r}^2 \geqslant \alpha g\lr*{\frac{1}{{r}}}^2}\mathrm{d}\alpha
            \\
            &\leqslant 4 + \int\limits_{4}^{\infty}\Prob \lr*{M_{r}^2 \geqslant \alpha g\lr*{\frac{1}{{r}}}^2} \mathrm{d}\alpha
            \\
            &\leqslant 4 + \int\limits_{4}^{\infty} C_1\alpha^{-3} \mathrm{d}\alpha \eqqcolon C_2.
        \end{aligned}
        \label{eq:var-bound}
    \end{equation}

    We estimate $\M_{n,m}$ from above and below as in Theorem~\ref{th:expect-asy}:
    \begin{equation}
        \label{eq:two-way-bound}
        \sum\limits_{k=m-n+1}^{m} M_{k}^{(k)} \;\le\; \M_{n,m} \;\le\; \sum\limits_{k=1}^{n} M_{m}^{(k)},
    \end{equation}
    where the summands in both sums are independent.

    For the upper bound, apply Chebyshev to $\frac{\sum\limits_{k=1}^{n} M_{m}^{(k)}}{n\, g\!\lr*{\tfrac{1}{m}}}$.
    Using \eqref{eq:var-bound}, for any fixed $t$ and all sufficiently large $m$,
    \[
        \Prob\set*{
            \abs*{
                \frac{\sum\limits_{k=1}^{n}M_{m}^{(k)}}{n g\lr*{\frac{1}{{m}}}} -
                \frac{\sum\limits_{k=1}^{n}\E M_{m}}{n g\lr*{\frac{1}{{m}}}}
            } \geqslant t
        }
        \leqslant \frac{\sum\limits_{k=1}^{n}\D M_{m}}{n^2 g\lr*{\frac{1}{{m}}}^{2}t^2}
        \leqslant \frac{nC_2}{n^{2}t^2}
        = \frac{C_2}{nt^2} \rightarrow 0, \quad n \to \infty.
    \]
    Hence
    \[
        \frac{\sum\limits_{k=1}^{n}M_{m}^{(k)} - \sum\limits_{k=1}^{n}\E M_{m}}{ n g\lr*{\frac{1}{{m}}}} \xrightarrow{\Prob} 0, \quad n \to \infty.
    \]
    By the asymptotics for expectations (Lemma~\ref{lm:max-expect}),
    \begin{equation}
        \label{eq:upper-bound}
        \frac{\sum\limits_{k=1}^{n}M_{m}^{(k)}}{ n g\lr*{\frac{1}{{m}}}}  \xrightarrow{\Prob} 1, \quad n \to \infty.
    \end{equation}

    For the lower bound, let $N$ be such that \eqref{eq:var-bound} holds for all $n\ge N$.
    Apply Chebyshev to
    $\frac{\sum\limits_{k=m-n+N}^{m} M_{k}^{(k)}}{n\, g\!\lr*{\tfrac{1}{m}}}$. Using \eqref{eq:var-bound} and the monotonicity of $g(\cdot)$, for any $t$,

    \begin{eqnarray*}
        &&\Prob\set*{\abs*{
            \frac{\sum\limits_{k=m - n + N}^{m}M_k^{(k)}}{n g\lr*{\frac{1}{{m}}}} -
            \frac{\sum\limits_{k=m - n + N}^{m} \E M_{k}}{n g\lr*{\frac{1}{{m}}}}
        } \geqslant t}
        \leqslant \frac{\sum\limits_{k=m - n + N}^{m}\D M_{k}}{n^2 g\lr*{\frac{1}{{m}}}^{2}t^2}\\
        &\leqslant& \frac{1}{n^{2}t^2}\sum\limits_{k=m - n + N}^{m}\frac{\D M_{k}}{g\lr*{\frac{1}{{k}}}^{2}}
        \leqslant \frac{nC_2}{n^{2}t^2} = \frac{C_2}{nt^2}\rightarrow 0, \quad m \to \infty.
    \end{eqnarray*}

    Therefore,
    \[
        \frac{\sum\limits_{k=m - n + N}^{m}M_k^{(k)} - \sum\limits_{k=m - n + N}^{m}\E M_{k}}{n g\lr*{\frac{1}{{m}}}} \xrightarrow{\Prob} 0, \quad n \to \infty.
    \]

    Note that for fixed $N$,
    \[
        \frac{\sum\limits_{k=m - n + 1}^{m - n + N - 1}M_k^{(k)} - \sum\limits_{k=m - n + 1}^{m - n + N - 1} \E M_{k} }{n g\lr*{\frac{1}{{m}}}} \xrightarrow{\Prob} 0, \quad n \to \infty.
    \]
    Summing up these terms yields
    \begin{equation}
        \frac{\sum\limits_{k=m - n + 1}^{m}M_k^{(k)} - \sum\limits_{k=m - n + 1}^{m}\E M_{k}}{n g\lr*{\frac{1}{{m}}}} \xrightarrow{\Prob} 0, \quad n \to \infty.
        \label{eq:centerd-prob-lim}
    \end{equation}

    From \eqref{eq:sum-expect-lower-bound} we have
    \begin{equation}
        \liminf\limits_{n \to \infty}\frac{\sum\limits_{k=m - n + 1}^{m}\E M_{k}}{n g\lr*{\frac{1}{{m}}}} \geqslant 1.
        \label{eq:liminf-expect-sum}
    \end{equation}
    For the upper limit, using the monotonicity of $g(\cdot)$,

    \[
        \sum\limits_{k=m - n + 1}^{m}\E M_{k} =
        \sum\limits_{k=m - n + 1}^{m} g\lr*{\frac{1}{k}}\cdot (1 + o(1)) \leqslant
        n \cdot g\lr*{\frac{1}{m}}\cdot (1 + o(1)).
    \]

    Hence
    \begin{equation}
        \limsup\limits_{n \to \infty} \frac{\sum\limits_{k=m - n + 1}^{m}\E M_{k}}{n g\lr*{\frac{1}{{m}}}} \leqslant 1.
        \label{eq:limsup-expect-sum}
    \end{equation}
    Combining \eqref{eq:liminf-expect-sum} and \eqref{eq:limsup-expect-sum} we get
    \begin{equation*}
        \frac{\sum\limits_{k=m - n + 1}^{m}\E M_{k}}{n g\lr*{\frac{1}{{m}}}} \to 1,\quad n \to \infty.
    \end{equation*}
    Putting this together with \eqref{eq:centerd-prob-lim}, we obtain
    
    \begin{equation}
        \label{eq:lower-bound}
        \frac{\sum\limits_{k=m - n + 1}^{m}M_k^{(k)}}{n g\lr*{\frac{1}{{m}}}} \xrightarrow{\Prob} 1, \quad n \to \infty.
    \end{equation}

    Finally, using the two-sided bound \eqref{eq:two-way-bound} together with \eqref{eq:upper-bound} and \eqref{eq:lower-bound} and the squeeze argument for convergence in probability, we conclude that
    \[\frac{\M_{n,m}}{ n g\lr*{\frac{1}{{m}}}} \xrightarrow{\Prob}  1, \quad n \to \infty. \qedhere\]
\end{proof}

\section{Exponential tail bounds}\label{sec:exp}
After establishing a law of large numbers for the random assignment process, a natural question arises: does an analogue of the large deviations principle hold for it?
As in the previous sections, we will work with upper and lower bounds for the random variable $\M_{n, m}$.
Below we present four exponential bounds: upper and lower bounds for the left and right tails of the distribution of $\M_{n, m}$.

We will use the following notation:
\begin{enumerate}
    \item $\overline{F}_X(t) = \Prob\set*{X \geqslant t}$,
    \item $F_X(t) = \Prob\set*{X \leqslant t}$,
    \item $\Gamma(\cdot)$ is Euler’s gamma function,
\end{enumerate}

A random variable $G$ has the \textit{Gumbel distribution} if its distribution function is $F_G(r) = e^{-e^{-r}}$.
This distribution has the following properties:
\begin{enumerate}
    \item $\E G = \gamma$, the Euler--Mascheroni constant,
    \item max-stability:
    \[
        \max_{1 \leqslant k \leqslant n} G^{(k)} \sim G + \log n.
    \]
    \item $\Lambda_{gum}(t) = \log \E e^{t(G - \gamma)} = \log \Gamma(1-t) - \gamma t$ is the cumulant generating function (log-mgf) of the centered Gumbel distribution,
    \item $\Lambda_{gum}^*(r) \coloneqq \sup\limits_{t \in \R}\lr*{tr - \Lambda_{gum}(t)}$ is its large-deviation rate function.
\end{enumerate}

A random variable $\Phi_\alpha$ has the \textit{Fréchet distribution} with parameter $\alpha$ if its distribution function is
\[
    F_{\Phi_\alpha}(r) =
\begin{cases}
    0, & r \leqslant 0, \\
    \exp\left(-r^{-\alpha}\right), & r > 0.
\end{cases}
\]
This distribution has the following properties:
\begin{enumerate}
    \item max-stability:
    \[
        \max_{1 \leqslant k \leqslant n} \Phi_\alpha^{(k)} \sim n^{\frac{1}{\alpha}}\Phi_\alpha.
    \]
    \item If $0 < k < \alpha$, then $\E \lr*{\Phi_\alpha}^k = \Gamma\!\left(1-\frac{k}{\alpha}\right)$.
\end{enumerate}

A random variable $\Psi_\alpha$ has the \textit{Weibull distribution} with parameter $\alpha > 0$ if its distribution function is
\[
F_{\Psi_\alpha}(r) =
\begin{cases}
\exp\left(-(-r)^\alpha\right), & r \leqslant 0, \\
1, & r > 0.
\end{cases}
\]
This distribution has the following properties:
\begin{enumerate}
    \item max-stability:
    \[
        \max_{1 \leqslant k \leqslant n} \Psi_\alpha^{(k)} \sim n^{-\frac{1}{\alpha}} \Psi_\alpha,
    \]
    \item For $k > 0$,
    \[
        \E \left(-\Psi_\alpha\right)^k = \Gamma\!\left(1 + \frac{k}{\alpha}\right).
    \]
\end{enumerate}

\begin{lemma}[Fisher--Tippett--Gnedenko theorem \cite{FT28, G43}]
    \label{lm:th-fisher-tippet-gnedenko}
    Let $\set*{X_{k}}_{k = 1}^{\infty}$ be a sequence of i.i.d.\ random variables. Define
    \[
    h(t) = \frac{\int_t^{\infty} \overline{F}_X(x) \, dx}{\overline{F}_X(t)}.
    \]
    Let $g(p)$ denote the upper quantile function of the distribution of $X$.
    Assume that for all $u > 0$,
    \[
        \frac{g\lr*{\frac{1}{ut}} - g\lr*{\frac{1}{t}}}{g\lr*{\frac{1}{et}} - g\lr*{\frac{1}{t}}} \to \log{u}, \quad t \to \infty.
    \]
    Then $\frac{M_n - b_n}{a_n} \Rightarrow G,$ where $b_n = g\!\lr*{\frac{1}{n}}$ and $a_n = h(b_n)$.
\end{lemma}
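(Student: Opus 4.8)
The plan is to reduce the weak convergence $\frac{M_n-b_n}{a_n}\Rightarrow G$ to a single analytic statement about the right tail $\overline F_X$, and then to extract that statement from the hypothesis on the quantile function, via the classical duality between de~Haan's function classes $\Pi$ and $\Gamma$. I work in the principal case $g(0+)=+\infty$, so that $b_n=g(1/n)\to\infty$; a finite right endpoint $\omega:=\sup\{x:\overline F_X(x)>0\}$ is treated identically after the substitution $t\mapsto\omega-1/t$. The reduction is standard: for fixed $x\in\R$,
\[
\Prob\!\left(\frac{M_n-b_n}{a_n}\leqslant x\right)=F_X(b_n+a_nx)^{\,n}=\exp\!\bigl(n\log\bigl(1-\overline F_X(b_n+a_nx)\bigr)\bigr),
\]
so it suffices to prove $n\,\overline F_X(b_n+a_nx)\to e^{-x}$ for every $x$: this forces $\overline F_X(b_n+a_nx)\to0$, hence $n\log(1-\overline F_X(b_n+a_nx))\to-e^{-x}$, and since $F_G$ is continuous, pointwise convergence of the distribution functions to $F_G$ gives the claim.

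Next I would translate the hypothesis. Writing $U(t):=g(1/t)$ and $\tilde a(t):=U(et)-U(t)$, the assumed limit says exactly that $U$ belongs to de~Haan's class $\Pi$ with auxiliary function $\tilde a$; by the uniform convergence theorem for $\Pi$ this holds locally uniformly in $u$, and $\tilde a$ is slowly varying (see \cite[Ch.~3]{BGT}). The core step is the inversion: by the $\Pi$--$\Gamma$ duality, $U\in\Pi(\tilde a)$ is equivalent to $1/\overline F_X$ being $\Gamma$-varying, i.e.\ to the existence of a positive $f$ with $\overline F_X(t+f(t)x)/\overline F_X(t)\to e^{-x}$ locally uniformly in $x$; and since $U$ and $1/\overline F_X$ are mutual generalised inverses, one obtains $f(b_n)\sim\tilde a(n)$ together with $n\,\overline F_X(b_n)\to1$ — the latter because $\Gamma$-variation forces the relative jump $\overline F_X(b_n-)/\overline F_X(b_n)\to1$ (bound $\overline F_X(b_n-)$ by $\overline F_X(b_n-\varepsilon f(b_n))$ and let $\varepsilon\to0$).

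It then remains to identify the normalising constants with $h(b_n)$ and to assemble. Since $\Gamma$-varying functions decay faster than any power, $h(t)=\int_t^\infty\overline F_X(s)\,ds/\overline F_X(t)$ is finite for large $t$; substituting $s=t+f(t)u$ gives $h(t)=f(t)\int_0^\infty\bigl(\overline F_X(t+f(t)u)/\overline F_X(t)\bigr)\,du\sim f(t)\int_0^\infty e^{-u}\,du=f(t)$, where a Potter-type domination $\overline F_X(t+f(t)u)/\overline F_X(t)\leqslant Ce^{-u/2}$ for $\Gamma$ justifies the passage to the limit; equivalently one checks directly that $\int_{b_n}^\infty\overline F_X\sim\tfrac{\tilde a(n)}{n}\int_1^\infty\tfrac{\log v}{v^2}\,dv=\tfrac{\tilde a(n)}{n}$ while $\overline F_X(b_n)\sim\tfrac1n$, so $h(b_n)\sim\tilde a(n)$. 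As auxiliary functions are asymptotically unique and the $\Gamma$-convergence is locally uniform in $x$, replacing $f$ by $h$ is legitimate, and with $a_n=h(b_n)$, $t=b_n$,
\[
n\,\overline F_X(b_n+a_nx)=\bigl(n\,\overline F_X(b_n)\bigr)\cdot\frac{\overline F_X(b_n+a_nx)}{\overline F_X(b_n)}\longrightarrow 1\cdot e^{-x},
\]
which is precisely what the reduction step requires.

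The step I expect to be the main obstacle is the inversion: passing from $\Pi$-variation of the quantile function to $\Gamma$-variation of $1/\overline F_X$ with matched auxiliary functions, and controlling possible atoms of $X$. This is the classical but delicate heart of the Gnedenko--de~Haan theory; invoking the representation and uniform convergence theorems of \cite{BGT} renders it routine, whereas a self-contained treatment would essentially have to reprove that part of the theory. The reduction, the identification $h\sim\tilde a$, and the final assembly are comparatively short.
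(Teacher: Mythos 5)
The paper does not prove this lemma: it is quoted as the classical Fisher--Tippett--Gnedenko/de~Haan criterion for the Gumbel domain of attraction, with the citations \cite{FT28,G43} standing in for a proof. Your sketch is the standard argument via the $\Pi$--$\Gamma$ duality (as in \cite[Ch.~3]{BGT} or Resnick~\cite{R87}, Ch.~1) and is essentially correct, including the identification of the mean-excess function $h$ with the auxiliary function and the verification that $n\,\overline{F}_X\bigl(g(1/n)\bigr)\to 1$ despite possible atoms; the only further point worth a word is the harmless distinction between $\overline{F}_X(y)=\mathsf{P}(X\geqslant y)$ and $\mathsf{P}(X>y)$ in the first reduction, which is absorbed by the continuity of the Gumbel limit.
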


A random variable $X$ is said to have a \textit{right tail of exponential type} with parameter $c$ if $X$ satisfies the condition of Lemma~\ref{lm:th-fisher-tippet-gnedenko} and $\lim\limits_{n \to \infty} a_n = c$.

\begin{remark}
    The class of variables with a right tail of exponential type includes the Gumbel distribution and the exponential distribution.
\end{remark}
\begin{remark}
    It is easy to show that if $X$ has a right tail of exponential type with parameter $c$, then the quantile function $g(p)$ is slowly varying at $0$. 
    The converse statement is false.
\end{remark}

\begin{theorem}
    \label{th:exponential-bound-exp-tail}
    Consider a sequence of random matrices $(X_{ij})$ of size $n \times m$ whose entries are i.i.d.\ random variables, where $m = m(n)$ and $m \geqslant n$.
    Suppose that
    \begin{enumerate}
        \item $\E X^{-} < \infty$,
        \item $X$ has a right tail of exponential type with parameter $c$.
    \end{enumerate}
    Define $\varepsilon\lr*{m,n} \coloneq g\!\lr*{\frac{1}{m}} - \frac{1}{n}\sum\limits_{k = 1}^{n} g\!\lr*{\frac{1}{m - n + k}}$, $\varepsilon_1 = \liminf\limits_{n, m \to \infty} \varepsilon\lr*{m,n}$, and $\varepsilon_2 = \limsup\limits_{n, m \to \infty} \varepsilon\lr*{m,n}$.

    Then the following bounds hold:
    \begin{eqnarray*}
        &&\limsup_{n,m \to \infty} \frac{1}{n}\log \Prob \set*{\M_{n, m} \geqslant n\lr*{cr + c\gamma + g\!\lr*{\frac{1}{m}}}} \leqslant -\Lambda_{gum}^{*}\!\lr*{r}  \quad r > 0; \\
        &&\liminf_{n,m \to \infty} \frac{1}{n}\log \Prob \set*{\M_{n, m} \geqslant n\lr*{cr + c\gamma + g\!\lr*{\frac{1}{m}}}} \geqslant -\Lambda_{gum}^{*}\!\lr*{r + \varepsilon_1} \quad r > -\varepsilon_1; \\
        &&\limsup_{n,m \to \infty} \frac{1}{n}\log \Prob \set*{\M_{n, m} \leqslant n\lr*{cr + c\gamma + g\!\lr*{\frac{1}{m}}}} \leqslant -\Lambda_{gum}^{*}\!\lr*{r + \varepsilon_2} \quad r < -\varepsilon_2; \\
        &&\liminf_{n,m \to \infty} \frac{1}{n}\log \Prob \set*{\M_{n, m} \leqslant n\lr*{cr + c\gamma + g\!\lr*{\frac{1}{m}}}} \geqslant -\Lambda_{gum}^{*}\!\lr*{r} \quad r < 0.
    \end{eqnarray*}
\end{theorem}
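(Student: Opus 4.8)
The plan is to transfer each of the four inequalities, by means of the deterministic sandwich already used in the proofs of Theorems~\ref{th:expect-asy} and~\ref{th:lln},
\[
\sum_{k=m-n+1}^{m} M_{k}^{(k)} \;\leqslant\; \M_{n,m} \;\leqslant\; \sum_{k=1}^{n} M_{m}^{(k)} \qquad (\text{independent summands in each sum}),
\]
to a one-sided large-deviation estimate for a sum of independent, asymptotically Gumbel summands, and then to apply the Chernoff bound (for the two upper estimates) and the exponential change of measure (for the two lower estimates). Since $\M_{n,m}\leqslant\sum_{k=1}^n M_m^{(k)}$, the right-tail upper bound and the left-tail lower bound will be read off from the i.i.d.\ sum $\sum_{k=1}^n M_m^{(k)}$; since $\M_{n,m}\geqslant\sum_{k=m-n+1}^m M_k^{(k)}$, the right-tail lower bound and the left-tail upper bound will come from $\sum_{k=m-n+1}^m M_k^{(k)}$, whose summands are independent but not identically distributed. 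By Lemma~\ref{lm:th-fisher-tippet-gnedenko} and the definition of an exponential-type right tail, $\bigl(M_k-g(1/k)\bigr)/a_k\Rightarrow G$ with $a_k\to c$, so the rescaled maxima $\widehat M_k\coloneqq\bigl(M_k-g(1/k)\bigr)/c$ converge weakly to $G$; and the centring $n\bigl(cr+c\gamma+g(1/m)\bigr)$ in the statement is exactly the one turning a deviation of size $r$ of $\frac1n\sum_k\bigl(\widehat M^{(k)}-\gamma\bigr)$, whose rate function is $\Lambda_{gum}^{*}$, into the stated threshold.

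Granting for the moment that $\log\E e^{t\widehat M_k}\to\log\Gamma(1-t)=\Lambda_{gum}(t)+\gamma t$ for $t$ in a neighbourhood of $0$, uniformly enough in $k$, the four estimates follow from the Gärtner--Ellis theorem. For the i.i.d.\ sum one has $\frac1n\log\E\exp\bigl(t\sum_{k=1}^n\widehat M_m^{(k)}\bigr)=\log\E e^{t\widehat M_m}\to\log\Gamma(1-t)$ as $m\to\infty$; the resulting upper (Chernoff) and lower (tilting) bounds with rate $\Lambda_{gum}^{*}$ give the right-tail upper bound and the left-tail lower bound with no shift, since the centring $\sum_{k=1}^n g(1/m)=n\,g(1/m)$ matches the threshold exactly (the $c\gamma$ term absorbing $\E G=\gamma$). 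For $\sum_{k=m-n+1}^m M_k^{(k)}$ the same per-term convergence, together with $a_k\to c$ and the negligibility — thanks to $\E X^-<\infty$ — of the $O(1)$ bounded-index terms, gives $\frac1n\sum_{k=m-n+1}^m\log\E e^{t\widehat M_k}\to\log\Gamma(1-t)$; now the centring is $\sum_{k=m-n+1}^m g(1/k)=n\bigl(g(1/m)-\varepsilon(m,n)\bigr)$, so the effective threshold is shifted by $\varepsilon(m,n)$, and passing to $\liminf$ and $\limsup$ and using that $\Lambda_{gum}^{*}$ is convex, continuous, and vanishes only at $0$ replaces $\varepsilon(m,n)$ by $\varepsilon_1$, respectively $\varepsilon_2$.

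The technical core — and the main obstacle — is the moment-generating-function convergence. On the right, $t\in(0,1)$, it is uniform integrability of $e^{t\widehat M_k}$, which I would extract from the hypothesis via a Potter-type / de~Haan estimate for $\Gamma$-varying tails, $\overline{F}_X\bigl(g(1/k)+cy\bigr)\big/\overline{F}_X\bigl(g(1/k)\bigr)\leqslant(1+o(1))\,e^{-(1-o(1))y}$ uniformly for $y\geqslant0$, which pins the right tail of $\widehat M_k$ below a Gumbel tail. The left side, $t<0$, is genuinely delicate: under only $\E X^-<\infty$ the moment generating function of $M_k$ may be $+\infty$ on the whole negative axis, so a bare Chernoff argument is unavailable. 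I would get around this by truncation together with the standing hypothesis $m\geqslant n$. For the left-tail lower bound one replaces $M_m^{(k)}$ by $M_m^{(k)}\vee(-K)$: the truncated maximum has a finite log-mgf for every $t<0$, still converging to $\log\Gamma(1-t)$ (the level $-K$ escaping to $-\infty$ relative to $g(1/m)$), while the event that some $M_m^{(k)}<-K$ has probability at most $n\,F_X(-K)^m\leqslant n\,F_X(-K)^n$, which is $o\bigl(e^{-n\Lambda_{gum}^{*}(r)}\bigr)$ once $K$ is large. For the left-tail upper bound the naive bound $\M_{n,m}\geqslant\sum_{k=m-n+1}^m M_k^{(k)}$ is too weak when $m$ is close to $n$, because that sum has heavy-tailed low-index terms while the optimal assignment simply routes around the $O(1)$ very-negative entries; here I would lower-bound $\M_{n,m}$ instead by the greedy value on the sub-table obtained by forbidding all entries below $-K$, which — because $m\geqslant n$ — still possesses a system of $n$ distinct representatives outside an event whose probability a Hall-type union bound controls by $C\,n^{3}F_X(-K)^{\Omega(n)}$, the resulting greedy sum having summands bounded below and hence a well-behaved left mgf. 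Letting $K\to\infty$ at the end recovers $\Lambda_{gum}^{*}$ from the truncated rate function. It is this excision-and-truncation step, hand in hand with the uniform moment-generating-function control, where the real work lies.
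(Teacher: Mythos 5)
Your proposal follows the same architecture as the paper's proof: the deterministic sandwich between the greedy sum $\sum_{k=m-n+1}^{m}M_k^{(k)}$ and the i.i.d.\ sum $\sum_{k=1}^{n}M_m^{(k)}$, the G\"artner--Ellis theorem applied to each, and the identification of the shift $\varepsilon(m,n)$ as the discrepancy between the centring $\sum_k g(1/k)$ of the greedy sum and the threshold $n\,g\!\lr*{\tfrac1m}$; your assignment of which sum controls which of the four tails, and of where the shift does and does not appear, is consistent with the statement. Where you genuinely diverge is in the technical core, namely the convergence $\log\E e^{t(M_k-\E M_k)}\to\Lambda_{gum}(t)$ for $t$ near $0$: the paper obtains this from Resnick's moment-convergence proposition applied to the exponential transforms $e^{X}$ (Fr\'echet side, $0<t<1$) and $-e^{-X}$ (Weibull side, $-1<t<0$), whereas you propose direct uniform-integrability estimates on the right and truncation on the left. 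Your worry about the left side is well founded and in fact touches a point the paper passes over: the Weibull branch of Resnick's proposition, as invoked in the proof of Statement~\ref{st:kumul-conv}, needs $\E e^{-kX}<\infty$, i.e.\ an exponential moment of $X^{-}$, which is not implied by the standing hypothesis $\E X^{-}<\infty$; under that hypothesis alone $\E e^{tM_k}$ can indeed be infinite for every $t<0$ and every $k$, exactly as you say. Your repairs are the right ones where they are cheap: replacing $M_m^{(k)}$ by $M_m^{(k)}\vee(-K)$ costs nothing for the lower bound on $\Prob\set{\M_{n,m}\leqslant\cdot}$, since the truncated sum is pointwise larger and its limiting log-mgf is still $\Lambda_{gum}$ because $\Prob\set{M_m<-K}=F_X(-K)^m$ vanishes superexponentially; and for the upper bound on $\Prob\set{\M_{n,m}\leqslant\cdot}$ with $m-n$ bounded one really must reroute around the few very negative entries, because $\sum_{k\geqslant m-n+1}F_X(-K)^k$ does not decay in $n$ there. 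That excision/Hall step is the one place where your argument remains a sketch rather than a proof: once a row of the restricted greedy gets stuck, its summand is no longer distributed as $M_k^{(k)}\vee(-K)$, and the residual matching of stuck rows and the claimed $F_X(-K)^{\Omega(n)}$ bound need to be written out over the whole range of $m\geqslant n$. Still, the idea is sound, and it is doing work that the paper's own proof does not do.
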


To prove the theorem we will need several auxiliary results.

\begin{lemma}[Gärtner--Ellis theorem \cite{G77, E84}]
    \label{th:gartner-ellis}
    Let $Z_k$ be a sequence of centered random variables.
    Suppose the limit $\Lambda_Z(t) \coloneq \lim\limits_{k \to \infty} \frac{1}{k} \log \E e^{t kZ_k}$ exists for all $t$ (possibly equal to $+\infty$).
    Let $\Lambda_Z^*(r) = \sup\limits_{t \in \R}\lr*{rt - \Lambda_Z(r)}$.

    Assume that $\Lambda_Z(t) < \infty$ in a neighborhood of zero.
    Then
    \begin{gather*}
        \liminf_{k \to \infty} \frac{1}{k} \log \Prob\set{Z_k \geqslant r} \geqslant -\Lambda_Z^*(r), \quad r > 0;\\
        \limsup_{k \to \infty} \frac{1}{k} \log \Prob\set{Z_k \leqslant r} \leqslant -\Lambda_Z^*(r), \quad r < 0.
    \end{gather*}
\end{lemma}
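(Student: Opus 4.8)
The plan is to prove the two one‑sided bounds separately: the upper bound on the lower tail by a direct Chernoff estimate, and the lower bound on the upper tail by an exponential change of measure. Throughout I would write $\Lambda\coloneqq\Lambda_Z$, $\Lambda^*\coloneqq\Lambda_Z^*$, and record the elementary facts that $\Lambda$ is convex (a pointwise limit of the convex maps $t\mapsto\frac1k\log\E e^{tkZ_k}$), that $\Lambda(0)=0$, and that—because each $Z_k$ is centered—Jensen's inequality gives $\frac1k\log\E e^{tkZ_k}\ge t\E Z_k=0$, so $\Lambda\ge0$ with minimum $0$ attained at $t=0$. These facts pin down the sign of the optimal tilting parameter in each tail.

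For the \emph{upper bound on the lower tail} ($r<0$) I would apply Markov's inequality with $t<0$: since $\set*{Z_k\le r}=\set*{e^{tkZ_k}\ge e^{tkr}}$, one gets $\frac1k\log\Prob\set*{Z_k\le r}\le-tr+\frac1k\log\E e^{tkZ_k}$, and passing to the $\limsup$ yields $\limsup_k\frac1k\log\Prob\set*{Z_k\le r}\le\inf_{t<0}\lr*{\Lambda(t)-tr}$. It then remains to identify this with $-\Lambda^*(r)=\inf_{t\in\R}\lr*{\Lambda(t)-tr}$. Here the convexity facts help: the right derivative of $t\mapsto\Lambda(t)-tr$ at $0$ is $\Lambda'(0^+)-r\ge-r>0$, so this function is nondecreasing on $[0,\infty)$ and its global infimum is attained on $(-\infty,0]$; by continuity the infimum over the open half‑line $(-\infty,0)$ agrees with it, giving the claim.

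For the \emph{lower bound on the upper tail} ($r>0$) I would use exponential tilting. Fix a small $\eta>0$ and choose $t_\eta>0$ with $\Lambda'(t_\eta)=r+\tfrac\eta2$; such $t_\eta$ exists because $\Lambda$ is finite near $0$, convex, and steep, with $\Lambda'(0)=0<r+\tfrac\eta2$. Let $\mu_k$ be the law of $Z_k$ and define the tilted laws $\tilde\mu_k$ by $\frac{d\tilde\mu_k}{d\mu_k}(z)=e^{t_\eta kz}/\E e^{t_\eta kZ_k}$. Restricting to $I\coloneqq[r,r+\eta]\subseteq\set*{Z_k\ge r}$ and using $t_\eta>0$,
\[
    \Prob\set*{Z_k\ge r}\ge\int_I d\mu_k=\E e^{t_\eta kZ_k}\int_I e^{-t_\eta kz}\,d\tilde\mu_k\ge\E e^{t_\eta kZ_k}\,e^{-t_\eta k(r+\eta)}\,\tilde\mu_k(I),
\]
so that $\liminf_k\frac1k\log\Prob\set*{Z_k\ge r}\ge\Lambda(t_\eta)-t_\eta(r+\eta)+\liminf_k\frac1k\log\tilde\mu_k(I)$. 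The crux is to show $\frac1k\log\tilde\mu_k(I)\to0$: the tilted cumulant $\frac1k\log\E_{\tilde\mu_k}e^{skZ_k}$ converges to $\tilde\Lambda(s)\coloneqq\Lambda(t_\eta+s)-\Lambda(t_\eta)$, which is finite near $0$ and differentiable there with $\tilde\Lambda'(0)=r+\tfrac\eta2$ lying strictly inside $I$; a Chernoff estimate under $\tilde\mu_k$ (with $s>0$ for the right endpoint, $s<0$ for the left) then forces $\tilde\mu_k\set*{Z_k\ge r+\eta}$ and $\tilde\mu_k\set*{Z_k<r}$ to decay exponentially, whence $\tilde\mu_k(I)\to1$. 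Feeding this back and using $\Lambda(t_\eta)-t_\eta r\ge-\Lambda^*(r)$ gives $\liminf_k\frac1k\log\Prob\set*{Z_k\ge r}\ge-\Lambda^*(r)-t_\eta\eta$; letting $\eta\to0$, so that $t_\eta\to t^*$ with $\Lambda'(t^*)=r$ finite, kills the error term.

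The hard part will be the concentration step $\frac1k\log\tilde\mu_k(I)\to0$. It is precisely here that the essential smoothness (steepness) and differentiability of $\Lambda$ are needed—first to select a tilting parameter $t_\eta$ whose induced mean sits strictly inside $I$, and then to control both tails of the tilted law through the limiting tilted cumulant $\tilde\Lambda$. In the paper's application $\Lambda=\Lambda_{gum}$ is real‑analytic on $(-\infty,1)$ and steep at the right endpoint, so every requirement above is met automatically.
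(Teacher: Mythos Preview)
The paper does not prove this lemma at all: it is quoted as the Gärtner--Ellis theorem with references \cite{G77,E84} and then used as a black box in the proof of Theorem~\ref{th:exponential-bound-exp-tail}. So there is nothing to compare your argument against; what you have written is a (compressed) textbook proof of Gärtner--Ellis, and the overall architecture---Chernoff for the upper bound, exponential tilting for the lower bound---is the standard one.

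That said, your sketch does not actually establish the lemma \emph{as stated}. The only regularity hypothesis in the paper's formulation is that $\Lambda$ is finite in a neighborhood of $0$; it does not assume differentiability on the interior of the effective domain, nor steepness at its boundary. Your tilting step, however, explicitly uses both: you need a $t_\eta>0$ with $\Lambda'(t_\eta)=r+\tfrac\eta2$, you need the tilted cumulant $\tilde\Lambda(s)=\Lambda(t_\eta+s)-\Lambda(t_\eta)$ to be differentiable at $0$ with $\tilde\Lambda'(0)=r+\tfrac\eta2$, and you need $t_\eta$ to stay bounded as $\eta\downarrow0$. You are candid about this, and you correctly observe that in the paper's application $\Lambda=\Lambda_{gum}(t)=\log\Gamma(1-t)-\gamma t$ is analytic on $(-\infty,1)$ and steep at $t=1$, so every one of these extra requirements is met. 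So your argument proves what the paper needs, but not the bare statement of the lemma; if you want a proof at the level of generality written there, you would have to invoke the exposed-point formulation of the Gärtner--Ellis lower bound and argue that each $r>0$ is an exposed point of $\Lambda^*$, which is again not guaranteed by finiteness near $0$ alone.
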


\begin{lemma}[Proposition 2.1 (i), Resnick (1987) \cite{R87}]
    \label{lm:proposition-resnick-freshet}
    Let $\set*{X_{i}}_{i = 1}^{\infty}$ be a sequence of i.i.d.\ random variables.
    Suppose there exists $0 < k < \alpha$ such that $\E \lr*{X^{-}}^k < \infty$, and there are $a_n$ such that
    \[\frac{M_n}{a_n} \Rightarrow \Phi_\alpha, \quad n \to \infty.\]
    Then $\E \lr*{\frac{M_n}{a_n}}^k \to \E \Phi_\alpha^k = \Gamma\lr*{1 - \frac{k}{\alpha}}, \quad n \to \infty$.
\end{lemma}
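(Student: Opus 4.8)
The plan is to upgrade the weak convergence $M_n/a_n \Rightarrow \Phi_\alpha$ to convergence of the $k$-th moment by establishing uniform integrability of the family $\{(M_n/a_n)^k\}$ and then invoking the standard fact that weak convergence together with uniform integrability yields convergence of expectations. Since the limit $\Phi_\alpha$ is supported on $(0,\infty)$ while $M_n$ may be negative for finite $n$, I would first split $M_n = M_n^+ - M_n^-$ and treat the two parts separately, reducing the claim to $\E(M_n^+/a_n)^k \to \Gamma(1-k/\alpha)$ and $\E(M_n^-/a_n)^k \to 0$.

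The negative part is immediate: since $M_n = \max_i X_i \ge X_1$, we have $M_n^- \le X_1^-$ almost surely, whence $\E(M_n^-/a_n)^k \le \E(X_1^-)^k / a_n^k$. The hypothesis $\E(X^-)^k < \infty$ together with $a_n \to \infty$ (which holds because $\Phi_\alpha$ has unbounded support, so the normalizing sequence is regularly varying of positive index $1/\alpha$) forces this to vanish. For the positive part I would use the layer-cake representation $\E(M_n^+/a_n)^k = \int_0^\infty k t^{k-1}\Prob(M_n > a_n t)\,dt$, and note that the weak convergence is equivalent to the regular-variation statement $n\overline{F}_X(a_n t) \to t^{-\alpha}$ for every $t>0$, so that $\Prob(M_n > a_n t) = 1-(1-\overline{F}_X(a_n t))^n \to 1-e^{-t^{-\alpha}} = \Prob(\Phi_\alpha > t)$ pointwise. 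Integrating the limit recovers $\int_0^\infty k t^{k-1}(1-e^{-t^{-\alpha}})\,dt = \Gamma(1-k/\alpha) = \E\Phi_\alpha^k$, so everything reduces to justifying passage to the limit under the integral.

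The main obstacle — and the only place where $k<\alpha$ is used essentially — is producing an integrable dominating function uniform in $n$. On $[0,1]$ the integrand is bounded by $k t^{k-1}$, which is integrable since $k>0$. On $[1,\infty)$ I would bound $\Prob(M_n > a_n t) \le n\overline{F}_X(a_n t)$ and write $n\overline{F}_X(a_n t) = [n\overline{F}_X(a_n)]\cdot \overline{F}_X(a_n t)/\overline{F}_X(a_n)$; the bracket converges to $1$, and since $\overline{F}_X$ is regularly varying of index $-\alpha$, the Potter bound gives, for any small $\epsilon>0$ and all large $n$, $\overline{F}_X(a_n t)/\overline{F}_X(a_n) \le C t^{-\alpha+\epsilon}$ uniformly for $t\ge 1$. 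Choosing $\epsilon$ with $k<\alpha-\epsilon$ makes $t^{k-1-\alpha+\epsilon}$ integrable at infinity, so dominated convergence applies and the positive-part moment converges to $\Gamma(1-k/\alpha)$; combining the two parts yields the claim. The delicate points to watch are the uniformity in $n$ of the Potter bound (it requires $a_n$ to exceed the Potter threshold, which holds since $a_n\to\infty$) and the choice of $\epsilon$ leaving enough room below $\alpha$.
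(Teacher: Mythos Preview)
The paper does not prove this lemma: it is quoted as Proposition~2.1(i) of Resnick~\cite{R87} and used as a black box in the proof of Statement~\ref{st:kumul-conv}, so there is no in-paper argument to compare against.

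Your proof is correct and is essentially the classical one found in Resnick's book. You split off the negative part via $M_n^{-}\le X_1^{-}$ and $a_n\to\infty$, then handle the positive part by the layer-cake formula together with dominated convergence, using the Potter bound for the regularly varying tail $\overline{F}_X$ to produce a uniform majorant on $[1,\infty)$. Two small points are worth making explicit if you write this up: first, the hypothesis $M_n/a_n\Rightarrow\Phi_\alpha$ is \emph{equivalent} to $\overline{F}_X$ being regularly varying of index $-\alpha$ (this is the Fr\'echet domain-of-attraction criterion), which is what licenses the Potter bound; second, for non-integer $k$ the expression $(M_n/a_n)^k$ is not literally defined when $M_n<0$, but your decomposition shows that both $\E(M_n^{+}/a_n)^k\to\Gamma(1-k/\alpha)$ and $\E(M_n^{-}/a_n)^k\to 0$, so any reasonable interpretation (signed power, absolute moment, or simply noting $\Prob(M_n<0)\to 0$) gives the stated limit.
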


\begin{lemma}[Proposition 2.1 (ii), Resnick (1987) \cite{R87}]
    \label{lm:proposition-resnick-weibull}
    Let $\set*{X_{i}}_{i = 1}^{\infty}$ be a sequence of i.i.d.\ random variables whose distribution $F$ has a finite right endpoint $x_0$.
    Suppose there exists $0 < k < \alpha$ such that $\E \abs*{X}^k < \infty$, and there is a sequence $a_n$ for which
    \[
        \frac{x_0 - M_n}{a_n} \Rightarrow -\Psi_\alpha, \quad n \to \infty.
    \]
    Then
    \[
        \E \lr*{\frac{x_0 - M_n}{a_n}}^k \to \E \lr*{-\Psi_\alpha}^k =  \Gamma\lr*{1 + \frac{k}{\alpha}}, \quad n \to \infty.
    \]
\end{lemma}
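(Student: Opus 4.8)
The statement is a standard instance of the principle that \emph{weak convergence together with uniform integrability upgrades to convergence of moments}. Writing $Y_n \coloneqq \frac{x_0 - M_n}{a_n}$, we have $Y_n \geqslant 0$ since $M_n \leqslant x_0$, and by hypothesis $Y_n \Rightarrow -\Psi_\alpha$, a nonnegative limit with $\E(-\Psi_\alpha)^k = \Gamma\!\lr*{1 + \frac{k}{\alpha}}$. Once I show that the family $\{Y_n^k\}_n$ is uniformly integrable, the convergence $\E Y_n^k \to \E(-\Psi_\alpha)^k$ follows immediately, which is exactly the claim. Thus the whole proof reduces to uniform integrability of $\{Y_n^k\}$, which I establish through a tail estimate split into a \emph{near-endpoint} and a \emph{far} regime.

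For the tail, I use $\Prob\{Y_n \geqslant t\} = \Prob\{M_n \leqslant x_0 - a_n t\} = F_X(x_0 - a_n t)^n \leqslant \exp\!\bigl(-n\,\overline{F}_X(x_0 - a_n t)\bigr)$. Classical extreme-value theory makes the assumed convergence to $-\Psi_\alpha$ equivalent to $\overline{F}_X(x_0 - s)$ being regularly varying at $0^+$ with index $\alpha$ together with the normalization $n\,\overline{F}_X(x_0 - a_n) \to 1$. Fixing a level $c_0 < x_0$, in the near-endpoint regime $x_0 - a_n t \geqslant c_0$ Potter's bound gives, for any small $\delta,\epsilon>0$ and all large $n$, $\frac{\overline{F}_X(x_0 - a_n t)}{\overline{F}_X(x_0 - a_n)} \geqslant (1-\delta)\,t^{\alpha-\epsilon}$ for $t \geqslant 1$; combined with the normalization this yields the uniform stretched-exponential bound $\Prob\{Y_n \geqslant t\} \leqslant \exp\!\bigl(-(1-\delta)\,t^{\alpha-\epsilon}\bigr)$. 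Since this decays faster than any power, $t^{k-1}\Prob\{Y_n \geqslant t\}$ is integrable on $[T,\infty)$ with a bound vanishing as $T \to \infty$ uniformly in $n$, which controls the near-endpoint contribution to uniform integrability.

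The far regime, where $M_n$ lies well below $x_0$, is where the moment hypothesis $\E\abs{X}^k < \infty$ enters. Here I discard the scaling and use the deterministic domination $0 \leqslant x_0 - M_n = \min_i(x_0 - X_i) \leqslant x_0 - X_1$, so that $\E\bigl[(x_0 - M_n)^k \mathbf{1}\{M_n < c_0\}\bigr] \leqslant \E\bigl[(x_0 - X_1)^k \mathbf{1}\{X_1 < c_0\}\bigr]\,F_X(c_0)^{n-1} \leqslant \E(x_0 - X)^k \cdot F_X(c_0)^{n-1}$, which is finite by $\E\abs{X}^k<\infty$ and decays geometrically since $F_X(c_0) < 1$. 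Dividing by $a_n^k$, which grows only polynomially in $n$ (as $a_n$ is regularly varying), shows this contribution tends to $0$ and hence poses no obstruction. Combining the two regimes yields uniform integrability of $\{Y_n^k\}$, and therefore the asserted moment convergence.

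I expect the main obstacle to be the uniform-in-$n$ tail estimate in the near-endpoint regime: Potter's bound must be applied to the slowly varying part of $\overline{F}_X(x_0 - a_n t)$ simultaneously across the whole range $t \in \bigl[T,(x_0-c_0)/a_n\bigr]$ while keeping $n\,\overline{F}_X(x_0 - a_n) \to 1$ under control, and one must verify that the resulting stretched-exponential bound is genuinely independent of $n$ (and that the finitely many small $n$, for which the far regime begins below $T$, are absorbed by their individual integrability). The far-regime estimate, by contrast, is routine once the domination by $x_0 - X_1$ is in place.
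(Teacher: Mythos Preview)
The paper does not give its own proof of this lemma; it is quoted verbatim from Resnick~\cite{R87} and used as a black box in the proof of Statement~\ref{st:kumul-conv}. So there is no in-paper argument to compare against.

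Your sketch is correct and is, in essence, the standard route (and Resnick's): upgrade weak convergence to moment convergence by establishing uniform integrability of $Y_n^k$, splitting according to whether $M_n$ lies in a small left neighbourhood of $x_0$ or not. The near-endpoint bound via Potter's inequality applied to the regularly varying function $s\mapsto \overline F_X(x_0-s)$, together with $n\,\overline F_X(x_0-a_n)\to 1$, indeed gives a uniform stretched-exponential tail $\Prob\{Y_n\ge t\}\le \exp(-c\,t^{\alpha-\epsilon})$ on $1\le t\le (x_0-c_0)/a_n$, and the far regime is handled exactly as you say by $x_0-M_n\le x_0-X_1$ and the geometric factor $F_X(c_0)^{n-1}$. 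One minor slip of wording: $a_n\to 0$, so it is $a_n^{-k}$ (not $a_n^k$) that grows at most polynomially in $n$; the conclusion is of course unaffected, since geometric decay beats polynomial growth. Also note that for the Weibull case the restriction $k<\alpha$ is not actually needed (the limit $-\Psi_\alpha$ has all positive moments), so the statement is slightly weaker than what your argument proves.
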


\begin{lemma}[Proposition 2.1 (iii), Resnick (1987) \cite{R87}]
    \label{lm:proposition-resnick-gum}
    Let $\set*{X_{i}}_{i = 1}^{\infty}$ be a sequence of i.i.d.\ random variables.
    Suppose there exists $k > 0$ such that $\E \lr*{X^{-}}^k < \infty$, and there exist $a_n, b_n$ such that
    \[\frac{M_n - b_n}{a_n} \Rightarrow G, \quad n \to \infty.\]
    Then $\E \lr*{\frac{M_n - b_n}{a_n}}^k \to \E G^k, \quad n \to \infty$.
\end{lemma}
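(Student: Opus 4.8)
The plan is to upgrade the weak convergence $Y_n \coloneqq (M_n - b_n)/a_n \Rightarrow G$ to convergence of $k$-th moments by the standard device: if $Y_n\Rightarrow G$ and the family $\set*{\abs*{Y_n}^k}$ is uniformly integrable, then $\E\phi(Y_n)\to\E\phi(G)$ for every continuous $\phi$ with $\set*{\abs{\phi(Y_n)}}$ uniformly integrable; taking $\phi(y)=y^k$ (for integer $k$, the case used in the paper) and writing $\abs*{Y_n}^k=(Y_n^+)^k+(Y_n^-)^k$, it suffices to prove uniform integrability of the positive and negative parts separately and recombine through $Y_n^k=(Y_n^+)^k+(-1)^k(Y_n^-)^k$. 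I will verify uniform integrability through uniform bounds on the tail probabilities $\Prob\set*{Y_n>x}$ and $\Prob\set*{Y_n<-x}$, and read off the limiting values from the tail-integral representation
\[
\E (G^+)^k=\int_0^\infty k x^{k-1}\,\Prob\set*{G> x}\,dx,\qquad \E (G^-)^k=\int_0^\infty k x^{k-1}\,\Prob\set*{G< -x}\,dx.
\]

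For the right tail I would start from the elementary inequality $\Prob\set*{Y_n>x}=1-F_X(b_n+a_nx)^n\le n\,\overline F_X(b_n+a_nx)$. Membership of $X$ in the Gumbel max-domain (the von~Mises--type regularity recorded in Lemma~\ref{lm:th-fisher-tippet-gnedenko}) gives the pointwise limit $n\,\overline F_X(b_n+a_nx)\to e^{-x}$ and, crucially, a Potter--type uniform estimate $n\,\overline F_X(b_n+a_nx)\le K e^{-\rho x}$ valid for all $x\ge0$ and all large $n$, with constants $K<\infty$ and $\rho>0$. Then $x^{k-1}\Prob\set*{Y_n>x}\le K x^{k-1}e^{-\rho x}$ supplies an integrable dominating function, which both yields uniform integrability of $(Y_n^+)^k$ and, via dominated convergence, gives $\E(Y_n^+)^k\to\E(G^+)^k$. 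No moment hypothesis on $X$ is needed here: the light right tail comes entirely from the domain-of-attraction structure.

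The negative part is where $\E(X^-)^k<\infty$ enters, and I would split the range at the sign change $x=b_n/a_n$ of $b_n-a_nx$. In the bulk region $0\le x\le b_n/a_n$ I use $F_X(b_n-a_nx)^n\le \exp\lr*{-n\,\overline F_X(b_n-a_nx)}$ together with the matching lower Potter bound $n\,\overline F_X(b_n-a_nx)\ge c\,e^{\rho x}$, giving the super-integrable domination $\Prob\set*{Y_n<-x}\le \exp\lr*{-c\,e^{\rho x}}$. In the deep region $x>b_n/a_n$ the argument is nonpositive, so $F_X(b_n-a_nx)\le F_X(0)$ and, peeling off one factor, $F_X(b_n-a_nx)^n\le F_X(0)^{\,n-1}\,\Prob\set*{X^-\ge a_nx-b_n}$; integrating and changing variables bounds this contribution by $F_X(0)^{\,n-1}$ times a factor that grows at most polynomially in $b_n$ and $a_n^{-1}$ and is finite precisely because $\E(X^-)^k<\infty$. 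Since $F_X(0)<1$ (as $b_n\to+\infty$ forces $X$ to take positive values), the exponentially small factor $F_X(0)^{\,n-1}$ dominates the slow growth of $b_n$ and $a_n^{-1}$, so the deep part is uniformly negligible. Combining the two regions yields uniform integrability of $(Y_n^-)^k$, hence $\E(Y_n^-)^k\to\E(G^-)^k$. The degenerate case $F_X(0)=1$ (a nonpositive $X$ with bounded $b_n$) is easier, being dominated directly by $\E(X^-)^k$.

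The main obstacle is the passage from the pointwise convergence $n\,\overline F_X(b_n+a_nx)\to e^{-x}$ to the two-sided uniform exponential control needed for domination: mere weak convergence of $Y_n$ is insufficient, and one must exploit the regular-variation/von~Mises structure of the Gumbel domain to obtain Potter bounds on both tails with constants independent of $n$. Once these uniform tail bounds are in hand, the uniform-integrability conclusion and the recombination of the positive and negative parts are routine, and the identification of the limit uses only the finiteness of all moments of the Gumbel law.
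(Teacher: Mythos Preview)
The paper does not supply its own proof of this lemma: it is quoted verbatim as Proposition~2.1\,(iii) from Resnick~\cite{R87} and used only as a black box (through Statement~\ref{st:expext-limit} with $k=1$). There is therefore nothing in the paper to compare your argument against.

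That said, your plan is essentially Resnick's own: upgrade weak convergence to moment convergence via uniform integrability, treating the positive and negative parts of $Y_n=(M_n-b_n)/a_n$ separately and invoking the extended--regular--variation (Potter--type) control on $n\,\overline F_X(b_n\pm a_nx)$ that the Gumbel domain supplies. Your identification of the right--tail domination $n\,\overline F_X(b_n+a_nx)\le K e^{-\rho x}$ and the left--tail ``bulk'' domination $\Prob\{Y_n<-x\}\le \exp(-c\,e^{\rho x})$ is the heart of the matter, and you correctly flag that this uniform (not merely pointwise) control is the real content.

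One point to sharpen in the ``deep'' region $x>b_n/a_n$: you assert that $F_X(0)^{\,n-1}$ beats ``polynomial growth in $b_n$ and $a_n^{-1}$'', but for general Gumbel--domain laws neither $b_n$ nor $a_n^{-1}$ is a priori polynomially bounded in $n$. What you actually need (and what holds) is that $b_n^{k}\,a_n^{-k}$ grows sub--exponentially in $n$; this follows because $b_n=g(1/n)$ with $g$ slowly varying and $a_n=h(b_n)$ with $h$ self--neglecting, so both quantities are $e^{o(n)}$. Once you state it that way, the geometric factor $F_X(0)^{\,n-1}$ wins and the deep contribution is indeed uniformly negligible. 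The edge case $F_X(0)=1$ that you single out is handled as you say.
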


\begin{statement}[a corollary of Lemma \ref{lm:proposition-resnick-gum}]
    \label{st:expext-limit}
    Let $\set*{X_{i}}_{i = 1}^{\infty}$ be a sequence of i.i.d.\ random variables with a right tail of exponential type with parameter $c$.
    Suppose that $\E X^{-} < \infty$.
    Then $\E \frac{M_n - g\lr*{\frac{1}{n}}}{c} \to \E G = \gamma, \quad n \to \infty$.
\end{statement}

\begin{statement}
    \label{st:kumul-conv}
    Let $\set*{X_{i}}_{i = 1}^{\infty}$ be a sequence of i.i.d.\ random variables with a right tail of exponential type with parameter $1$.
    Then for all sufficiently large $n$ and any $t \in (-1, 1)$, the quantity $\E e^{t\lr*{M_n - \E M_n}}$ is well defined and
    \[
        \log \E e^{t\lr*{M_n - \E M_n}} \to \Lambda_{gum}(t), \quad n \to \infty, -1 < t < 1.
    \]
\end{statement}

\begin{proof}
    Note that $M_n - g\lr*{\frac{1}{n}} \Rightarrow G$ by Lemma~\ref{lm:th-fisher-tippet-gnedenko}.
    Apply the continuous map $\exp\set*{\cdot}$ to both sides to obtain
    \[
        e^{M_n - g\lr*{\frac{1}{n}}} \Rightarrow e^{G}, \quad n \to \infty.
    \]
    Using the fact that $e^{G} \sim \Phi_1$,
    set $Y_n = e^{X_n}$ and $A_n = e^{g\lr*{\frac{1}{n}}}$; then
    \[
        \frac{\max\set*{Y_1, \dots, Y_n}}{A_n} \Rightarrow \Phi_1, \quad n \to \infty.
    \]
    By Lemma~\ref{lm:proposition-resnick-freshet}, for $0 < t < 1$ we get
    \[
        \E e^{t\lr*{M_n - g\lr*{\frac{1}{n}}}} = \E \lr*{\frac{\max\set*{Y_1, \dots, Y_n}}{A_n}}^t \to \E \Phi_1^t = \Gamma\lr*{1 - t} \quad n \to \infty.
    \]
    Next, apply the continuous map $-\exp\set*{-\cdot}$ to both sides of the weak convergence to obtain
    \[
        -e^{-M_n + g\lr*{\frac{1}{n}}} \Rightarrow -e^{-G}, \quad n \to \infty.
    \]
    Using the fact that $-e^{-G} \sim -\Psi_1$,
    set $Z_n = -e^{-X_n}$, $B_n = -e^{-g\lr*{\frac{1}{n}}}$, and $x_0 = 0$. Then
    \[
        \frac{x_0 - \max\set*{Z_1, \dots, Z_n}}{B_n} \Rightarrow -\Psi_1, \quad n \to \infty.
    \]
    By Lemma~\ref{lm:proposition-resnick-weibull}, for $-1 < t < 0$ we obtain
    \[
        \E e^{t\lr*{M_n - g\lr*{\frac{1}{n}}}} = \E \lr*{\frac{-\max\set*{Z_1, \dots, Z_n}}{B_n}}^{-t} \to \E \lr*{-\Psi_1}^{-t} = \Gamma\lr*{1 - t} \quad n \to \infty.
    \]
    
    Note that $\E M_n - g\lr*{\frac{1}{n}} \to \gamma$ as $n \to \infty$ (Statement~\ref{st:expext-limit}); therefore
    \[
        \log \E e^{t\lr*{M_n - \E M_n}} \to \log \Gamma\lr*{1 - t} - t\gamma = \Lambda_{gum}(t), \quad n \to \infty, -1 < t < 1.
    \]
\end{proof}

\begin{proof}[Proof of Theorem \ref{th:exponential-bound-exp-tail}]
    Note that if a random variable $X$ has a right tail of exponential type with parameter $c$, then the random variable $\frac{X}{c}$ has a right tail of exponential type with parameter $1$.
    Hence we may assume $c = 1$.

    From Statement~\ref{st:expext-limit} it follows that
    \[
        \E M_k - g\lr*{\frac{1}{k}} - \gamma \to 0, \quad k \to \infty.
    \]

    Using the convergence of the cumulant of the centered maximum from Statement~\ref{st:kumul-conv}, we estimate the limiting cumulants of the centered upper and lower bounds for $\M_{n, m}$ from~\eqref{eq:two-way-bound}.

    Limiting cumulant of the upper bound:
    \begin{eqnarray*}
        &&
        \lim_{n,m \to \infty} \frac{1}{n}\log \E e^{t\lr*{\sum\limits_{k = m - n + 1}^{m} \lr*{M_m^{(k)} - \E M_m}}}
        \\
        &=&
        \lim_{n,m \to \infty} \log \E e^{t\lr*{M_m - \E M_m}}
        \\
        &=&
        \Lambda_{gum}(t).
        \\
    \end{eqnarray*}

    Limiting cumulant of the lower bound:
    \begin{eqnarray*}
        &&
        \lim_{n,m \to \infty} \frac{1}{n}\log \E e^{t\lr*{\sum\limits_{k = m - n + 1}^{m} \lr*{M_k - \E M_k}}}
        \\
        &=&
        \lim_{n,m \to \infty} \frac{\sum\limits_{k = m - n + 1}^{m} \log \E e^{t\lr*{M_m - \E M_m}}}{n}
        \\
        &=&
        \Lambda_{gum}(t).
        \\
    \end{eqnarray*}

    Apply Lemma~\ref{th:gartner-ellis} to the sequences $\frac{\sum\limits_{k = m - n + 1}^m \lr*{M_m^{(k)} - \E M_m}}{n}$ and $\frac{\sum\limits_{k = m - n + 1}^m \lr*{M_k - \E M_k}}{n}$ and write down four upper and lower bounds for the left and right tails.

    Lower bound for the left tail:
    \begin{eqnarray*}
        &&
        \liminf_{n, m \to \infty} \frac{1}{n} \log \Prob \set*{\M_{n, m} \geqslant n\lr*{r + \gamma + g\lr*{\frac{1}{m}}}}
        \\
        &\geqslant&
        \liminf_{n, m \to \infty} \frac{1}{n}\log \Prob \set*{\sum\limits_{k = m - n + 1}^m M_m^{(k)} \geqslant n\lr*{r + \gamma + g\lr*{\frac{1}{m}}}}
        \\
        &=&
        \liminf_{n, m \to \infty} \frac{1}{n}\log \Prob \set*{\frac{\sum\limits_{k = m - n + 1}^m \lr*{M_m^{(k)} - \E M_m}}{n} \geqslant r + o(1)}
        \\
        &=& -\Lambda_{gum}^{*}(r), \quad r \geqslant 0.
    \end{eqnarray*}

    Upper bound for the left tail:
    \begin{eqnarray*}
        &&
        \limsup_{n, m \to \infty} \frac{1}{n} \log \Prob \set*{\M_{n, m} \geqslant n\lr*{r + \gamma + g\lr*{\frac{1}{m}}}}
        \\
        &\leqslant&
        \limsup_{n, m \to \infty} \frac{1}{n}\log \Prob \set*{\sum\limits_{k = m - n + 1}^m M_k \geqslant n\lr*{r + \gamma + g\lr*{\frac{1}{m}}}}
        \\
        &=&
        \limsup_{n, m \to \infty} \frac{1}{n}\log \Prob \set*{\frac{\sum\limits_{k = m - n + 1}^m \lr*{M_k - \E M_k}}{n} \geqslant r + \varepsilon(n,m) + o(1)}
        \\
        &\leqslant& -\Lambda_{gum}^{*}(r + \varepsilon_2), \quad r \geqslant -\varepsilon_2.
    \end{eqnarray*}

    Upper bound for the right tail:
    \begin{eqnarray*}
        &&
        \limsup_{n, m \to \infty} \frac{1}{n} \log \Prob \set*{\M_{n, m} \leqslant n\lr*{r + \gamma + g\lr*{\frac{1}{m}}}}
        \\
        &\leqslant&
        \limsup_{n, m \to \infty} \frac{1}{n} \log \Prob \set*{\sum\limits_{k = m - n + 1}^m M_m^{(k)}  \leqslant n\lr*{r + \gamma + g\lr*{\frac{1}{m}}}}
        \\
        &=&
        \limsup_{n, m \to \infty} \frac{1}{n} \log \Prob \set*{\frac{\sum\limits_{k = m - n + 1}^m \lr*{M_m^{(k)} - \E M_m}}{n} \leqslant r + o(1)}
        \\
        &=& -\Lambda_{gum}^{*}(r), \quad r \leqslant 0.
    \end{eqnarray*}

    Lower bound for the right tail:
    \begin{eqnarray*}
        &&
        \liminf_{n, m \to \infty} \frac{1}{n} \log \Prob \set*{\M_{n, m} \leqslant n\lr*{r + \gamma + g\lr*{\frac{1}{m}}}}
        \\
        &\geqslant&
        \liminf_{n, m \to \infty} \frac{1}{n}\log \Prob \set*{\sum\limits_{k = m - n + 1}^m M_k \leqslant n\lr*{r + \gamma + g\lr*{\frac{1}{m}}}}
        \\
        &=&
        \liminf_{n, m \to \infty}  \frac{1}{n}\log \Prob \set*{\frac{\sum\limits_{k = m - n + 1}^m \lr*{M_k - \E M_k}}{n} \leqslant r + \varepsilon(n,m)  + o(1)}
        \\
        &\geqslant& -\Lambda_{gum}^{*}(r + \varepsilon_1), \quad r \leqslant -\varepsilon_1.
    \end{eqnarray*}
\end{proof}

\section{Conclusion}\label{sec:concl}
% !TEX root = main_en.tex
% English version of: 5 - Заключение.tex

The author is grateful to M.\,A.~Lifshits for posing the problems and for helpful advice.

\section*{References}\label{sec:bib}
% !TEX root = main_en.tex
% English alias of: 6 - Список литературы.tex
\patchcmd{\thebibliography}{\section*{\refname}}{}{}{}


\begin{thebibliography}{9}
    \bibitem{LT22}
    \href{https://doi.org/10.1016/j.spl.2022.109530}{Lifshits, M.A., Tadevosian, A.A. On the maximum of random assignment process. Statistics and Probability Letters 187, 109530 (2022).}

    \bibitem{MP87}
    \href{https://doi.org/10.1051/jphys:019870048090145100}{Mézard, M., Parisi, G. On the solution of the random link matching problem. J. Physique 48, 1451–1459 (1987).}

    \bibitem{Al01}
    \href{https://doi.org/10.1002/rsa.1015}{Aldous, D.J. The $\zeta(2)$ limit in the random assignment problem. Random Struct. Algorithms 18(4), 381–418 (2001).}

    \bibitem{MS21}
    \href{https://doi.org/10.1016/j.spl.2021.109087}{Mordant, G., Segers, J. Maxima and near-maxima of a Gaussian random assignment field. Statist. Probab. Lett. 173, 109087 (2021).}

    \bibitem{NPS05}
    \href{https://doi.org/10.1002/rsa.20084}{Nair, C., Prabhakar, B., Sharma, M. Proofs of the Parisi and Coppersmith–Sorkin random assignment conjectures. Random Struct. Algorithms 27(4), 413–444 (2005).}

    \bibitem{AS02}
    \href{https://doi.org/10.1017/S0963548302005114}{Alm, S.E., Sorkin, G.B. Exact expectations and distributions for the random assignment problem. Combin. Probab. Comput. 11(3), 217–248 (2002).}

    \bibitem{BCR02}
    \href{https://doi.org/10.1002/rsa.10045}{Buck, M.W., Chan, C.S., Robbins, D.P. On the expected value of the minimum assignment. Random Struct. Algorithms 21(1), 33–58 (2002).}

    \bibitem{CS99}
    \href{https://doi.org/10.1002/(SICI)1098-2418(199909)15:2%3C113::AID-RSA1%3E3.0.CO;2-S}{Coppersmith, D., Sorkin, G.B. Constructive bounds and exact expectations for the random assignment problem. Random Struct. Algorithms 15(2), 113–144 (1999).}

    \bibitem{LW02}
    \href{https://arxiv.org/abs/math/0006146}{Linusson, S., Wästlund, J. A generalization of the random assignment problem. arXiv:math/0006146 (2000).}

    \bibitem{W09}
    \href{https://doi.org/10.1214/ECP.v14-1475}{Wästlund, J. An easy proof of the $\zeta(2)$ limit in the random assignment problem. Electron. Commun. Probab. 14, 261–269 (2009).}

    \bibitem{BGT}
    \href{https://doi.org/10.1017/CBO9780511721434}{Bingham, N.H., Goldie, C.M., Teugels, J.L. Regular Variation. Cambridge University Press, Cambridge (1987).}

    \bibitem{BWZ10}
    \href{https://doi.org/10.1109/TCOMM.2010.04.080478}{Bai, B., Chen, W., Cao, Z., Letaief, K.B. Max-matching diversity in OFDMA systems. IEEE Trans. Commun. 58(4), 1161–1171 (2010).}

    \bibitem{BWLZ13}
    \href{https://doi.org/10.1109/TIT.2012.2227454}{Bai, B., Chen, W., Cao, Z., Letaief, K.B. Outage exponent: a unified performance metric for parallel fading channels. IEEE Trans. Inf. Theory 59(3), 1657–1677 (2013).}

    \bibitem{YYHL21}
    \href{https://doi.org/10.1109/INFOCOM42981.2021.9488684}{Chen, Y., Wu, Y., Hou, Y.T., Lou, W. mCore: Achieving submillisecond scheduling for 5G MU-MIMO systems. In: IEEE INFOCOM 2021 – IEEE Conference on Computer Communications, pp. 1–10 (2021).}

    \bibitem{KXYY20}
    \href{https://doi.org/10.1109/TKDE.2020.3006084}{Ke, J., Xiao, F., Yang, H., Ye, J. Learning to delay in ride-sourcing systems: a multi-agent deep reinforcement learning framework. IEEE Trans. Knowl. Data Eng. 34(5), 2280–2292 (2022).}

    \bibitem{G77}
    \href{https://doi.org/10.1137/1122003}{Gärtner, J. On large deviations from an invariant measure. Teor. Veroyatnost. i Primenen. 22(1), 27–42 (1977).}

    \bibitem{E84}
    \href{https://doi.org/10.1214/aop/1176993370}{Ellis, R.S. Large deviations for a general class of random vectors. Ann. Probab. 12(1), 1–12 (1984).}

    \bibitem{R87}
    \href{https://doi.org/10.1007/978-0-387-75953-1}{Resnick, S.I. Extreme values, regular variation, and point processes. Springer Series in Operations Research and Financial Engineering, Springer-Verlag, New York, 1987.}

    \bibitem{FT28}
    \href{https://doi.org/10.1017/S0305004100015681}{Fisher, R.A., Tippett, L.H.C. Limiting forms of the frequency distribution of the largest or smallest member of a sample. \textit{Proceedings of the Cambridge Philosophical Society} \textbf{24}, 180–290 (1928).}

    \bibitem{G43}
    \href{https://doi.org/10.2307/1968974}{Gnedenko, B.V. Sur la distribution limite du terme maximum d’une série aléatoire. \textit{Ann. of Math.} \textbf{44}(3), 423–453 (1943).}
\end{thebibliography}
\end{document}